\title[Norm-constrained determinantal representations of polynomials]
{Norm-constrained determinantal \\ representations of multivariable polynomials}
\author[Grinshpan]{Anatolii Grinshpan}
\author[Kaliuzhnyi-Verbovetskyi]{Dmitry~S.~Kaliuzhnyi-Verbovetskyi}
\author[Woerdeman]{Hugo J.~Woerdeman}
\address{\\ Department of Mathematics \\
Drexel University\\
3141 Chestnut St.\\
Philadelphia, PA, 19104}
\email{\{tolya,dmitryk,hugo\}@math.drexel.edu}
\thanks{The authors were partially
supported by NSF grant DMS-0901628.}
\subjclass{15A15; 47A13; 47A20; 47A48} \keywords{Determinantal
representation; multivariable polynomial; $d$-variable
Schur--Agler class; Agler denominator; semi-stable polynomial}
\theoremstyle{plain}
\newtheorem{thm}{Theorem}[section]
\newtheorem{cor}[thm]{Corollary}
\newtheorem{lem}[thm]{Lemma}
\newtheorem{prop}[thm]{Proposition}
\numberwithin{equation}{section}
\newcommand{\ds}{\displaystyle}
\newcommand{\bs}{\bigskip}
\newcommand{\mc}{\mathcal}
\newcommand{\s}{\sigma}
\newcommand{\Hspace}[1]{\ensuremath{\mathcal{#1}}}
\theoremstyle{remark}
\newtheorem{rem}[thm]{Remark}
\newtheorem{ex}[thm]{Example}
\newcommand{\diag}{\operatorname{diag}}
\newcommand{\tdeg}{\operatorname{tdeg}}
\newcommand{\row}{\operatorname{row}}
\newcommand{\col}{\operatorname{col}}
\newcommand{\per}{\operatorname{per}}
\newcommand{\card}{\operatorname{card}}
\numberwithin{equation}{section}
\renewcommand{\bs}{\left(\begin{smallmatrix}}
\newcommand{\es}{\end{smallmatrix}\right)}
\newcommand{\bd}{\left|\begin{array}{ccc}}
\newcommand{\ed}{\end{array}\right|}
\renewcommand{\ds}{\displaystyle}
\renewcommand{\bs}{\bigskip}
\renewcommand{\mc}{\mathcal}
\renewcommand{\s}{\sigma}
\begin{document}

\maketitle

\begin{abstract} For every multivariable polynomial $p$, with $p(0)=1$, we construct a determinantal representation
$$p=\det (I - K Z ),$$
where $Z$ is a diagonal matrix with coordinate variables on the
diagonal and $K$ is a complex square matrix. Such a representation
is equivalent to the existence
 of $K$ whose principal minors satisfy certain linear relations. When norm constraints on
 $K$ are imposed, we give connections to the multivariable von Neumann inequality, Agler denominators,
 and stability. We show that if a multivariable polynomial $q$, $q(0)=0,$ satisfies the von Neumann
  inequality, then $1-q$ admits a determinantal representation with $K$ a contraction. On the other
  hand, every determinantal representation with a contractive $K$ gives rise to a rational inner
  function in the Schur--Agler class.
\end{abstract}

\section{Introduction}\label{sec:Intro}

Our object of study is determinantal representations
\begin{equation}\label{eq:repr} p(z)= \det (I_{|n|} - K Z_n ), \end{equation}
for a $d$-variable polynomial $p(z)$, $z=(z_1,\ldots , z_d)$, with
$p(0)=1$. Here $n=(n_1, \ldots , n_d)$ is in the set $\mathbb
N_0^d$ of $d$-tuples of nonnegative integers, $|n|=n_1+\cdots +
n_d$, $Z_n = \bigoplus_{i=1}^d z_i I_{n_i}$, and $K$ is a complex
square matrix. It is of interest of how and to what extent, the
algebraic and operator-theoretic properties of the polynomial
correspond to the size and norm of the matrix $K$ of its
representation.

Various determinantal representations of polynomials have been studied,
 often for polynomials over the reals: see a recent overview article \cite{Vinnikov},
  together with bibliography, and also \cite{Net-Thom}. The particular form of \eqref{eq:repr}
  has appeared before, for instance, in \cite{Borcea}. An important early result on two-variable
  polynomials was obtained by A. Kummert \cite[Theorem 1]{Kummert}.

Given a $d$-variable polynomial $p(z)$, $p(0)=1$, we consider the question of whether it can be represented in the form \eqref{eq:repr}
for some $n\in {\mathbb N}_0^d$ and some $|n|\times |n|$ complex matrix $K$, possibly subject to a constraint.
It will be shown that the unconstrained version of this question can always be answered in the affirmative (Section \ref{sec:affine-det-repr}).
The problem of minimizing the operator norm of $K$ over all representations \eqref{eq:repr} of $p$ will be seen to be more involved (Section \ref{sec:constr-det-repr}).

We will say that the \emph{multi-degree} $\deg p$ of a polynomial $p$ is $m=(m_1, \ldots , m_d)$ if $m_i=\deg_ip$ is the degree of $p$ as a polynomial of $z_i$, $i=1,\ldots,d$. The \emph{total degree} $\tdeg p$ of $p$ is the largest $|k|$ over all monomials $z^k=z_1^{k_1}\cdot\ldots\cdot z_d^{k_d}$ of $p$. For $m, n\in\mathbb N_0^d$, the inequality $m\le n$ will be meant in the usual component-wise sense: $m_i\le n_i$, $i=1,\ldots, d$.

For a matrix $K$, the principal submatrix determined by an index set $\alpha$ will be denoted by $K[\alpha]$. Given a collection of complex numbers $c_\alpha$, indexed by nonempty subsets $\alpha$ of $\{ 1, \ldots , d \}$, the Principal Minor Assignment Problem (see, e.g., \cite{Stouf,HoSch}) consists of finding a $d\times d$ matrix $K$ such that $\det K[\alpha]=c_\alpha$ for all $\alpha$.
This problem is, in general, overdetermined since the number of independent principal minors grows exponentially with the matrix size, $d$, while the number of free parameters, the matrix entries, is $d^2$. It becomes well-posed under additional assumptions on $K$ or $d$.
For theoretical and computational advances, see \cite{GrTs,LinSt,HoSt}.

A polynomial of multi-degree $(1, \ldots, 1)$, is said to be \emph{multi-affine}. For such a polynomial, the problem of finding a representation \eqref{eq:repr} with $n=(1,\ldots,1)$ is equivalent to the Principal Minor Assignment Problem. This follows by comparing the expansion
$$\det (I_d - K Z_{(1,\ldots,1)}) = 1 + \sum_{\alpha\neq\emptyset}(-1)^{\card\alpha} \det K[\alpha ] \prod_{i\in \alpha} z_i,$$
to the general form
$$p(z)=1+\sum_{\alpha\neq\emptyset} (-1)^{\card\alpha}c_\alpha \prod_{i\in \alpha} z_i$$
of a $d$-variable multi-affine polynomial $p$, $p(0)=1$.

For a general polynomial $p$, finding a determinantal
representation \eqref{eq:repr} with $n=\deg p$ may not be possible
by the same dimension count as above. It is clear that
\eqref{eq:repr} implies that $n\ge\deg p$. If $n$ is prescribed,
one may view \eqref{eq:repr} as the Principal Minor Relation
Problem formulated in Section \ref{sec:affine-det-repr}.

This paper is largely motivated by our study \cite{GKVVW} of the multivariable von Neumann inequality
and the discrepancy between the Schur and Schur--Agler norms of
analytic functions on the unit polydisk $${\mathbb D}^d= \{
z=(z_1,\ldots,z_d) \in {\mathbb C}^d\colon |z_i|<1,\ i=1,\ldots,d
\}.$$

 The \emph{Schur class}, consisting of analytic $\mathcal{L}(\Hspace{U},\Hspace{Y})$-valued
functions $f$ on
${\mathbb D}^d$ such that
\begin{equation}\label{eq:sup-norm}
\|f\|_\infty:=\sup_{z \in {\mathbb D}^d} \| f(z) \|\le 1,
 \end{equation}
will be denoted by $\mathcal{S}_d(\Hspace{U},\Hspace{Y})$. Here
$\mathcal{L}(\Hspace{U},\Hspace{Y})$ is the Banach space of
bounded linear operators from a Hilbert space $\Hspace{U}$ to a
Hilbert space $\Hspace{Y}$. The \emph{Schur--Agler class},
introduced in \cite{A}, will be denoted by
$\mathcal{SA}_d(\Hspace{U},\Hspace{Y})$. It consists of analytic
$\mathcal{L}(\Hspace{U},\Hspace{Y})$-valued functions on ${\mathbb
D}^d$ such that
\begin{equation}\label{eq:Agler-norm}
\|f\|_{\mathcal{A}}:=\sup_T\| f(T) \| \le 1,
 \end{equation}
where the supremum is taken over all $d$-tuples
$T=(T_1,\ldots,T_d)$ of commuting strict contractions on a common
Hilbert space. In the scalar case
$\Hspace{U}=\Hspace{Y}=\mathbb{C}$ and in the case $\Hspace
U=\Hspace Y$, we will use respective shortcuts $\mathcal{S}_d$,
$\mathcal{SA}_d$, and $\mathcal{S}_d(\Hspace U)$,
$\mathcal{SA}_d(\Hspace U)$.

For a bounded  analytic function $f\colon {\mathbb D}^d\to \mathcal{L}(\Hspace{U},\Hspace{Y})$,
the \emph{von Neumann inequality} is the inequality between its Schur and Schur--Agler norms:
\begin{equation}\label{eq:vN} \| f \|_\mathcal{A} \le \| f \|_\infty. \end{equation}
It is valid when $d=1$ \cite{vN} and $d=2$ \cite{Ando-pair}, and
not always valid when $d\ge3$ \cite{Varo,CD,Hol01}. Thus a Schur
function $f$ is Schur--Agler if and only if \eqref{eq:vN} holds.
One has the inclusion
$\mathcal{SA}_d(\Hspace{U},\Hspace{Y})\subseteq\mathcal{S}_d(\Hspace{U},\Hspace{Y})$.
The two classes coincide when $d=1$ and $d=2$, and the inclusion
is proper when $d\ge3$. See, e.g., \cite{GKVVW} for details.

A $d$-variable polynomial is said to be \emph{stable} if it has no
zeros in $\overline{\mathbb{D}}^d$, and \emph{semi-stable} if it
has no zeros in ${\mathbb{D}}^d$. A rational function in $\mc
S_d(\mathbb{C}^N)$ is said to be \emph{inner} if its radial limits
are unitary (unimodular, in the scalar case) almost everywhere on
the $d$-torus. Every scalar-valued rational inner function is
necessarily of the form $f(z)=z^n\bar p(1/z)/p(z)$ for some
$n\in\mathbb N_0^d$ and a semi-stable polynomial $p$ \cite[Theorem
5.5.1]{Rudin}, where $\bar{p}(z):=\overline{p(\bar{z})}$. A
rational inner function $f\in\mc S_d$ is said to have a {\it
transfer-function realization} (of order $m\in\mathbb N_0^d$) if
there exists a unitary matrix
$$U=\left[
\begin{matrix} A& B \cr C& D \end{matrix} \right] \in
{\mathbb C}^{(1+|m|)\times (1+|m|)}$$ so that
\begin{equation}\label{eq:tf}
f(z) = A + BZ_m (I-DZ_m)^{-1}C.
\end{equation}
Such a realization for a scalar-valued rational inner $f$ exists if and only if
$f\in\mc{SA}_d$ \cite{A},\cite[Theorem 2.9]{Knese2011}.

In Section \ref{wedge}, we explore the Schur--Agler class in the
context of exterior products, proving, in particular, that if $S$
is a matrix-valued Schur--Agler function, then so are its
determinant $\det S$ and permanent $\per S$.

Following \cite{KneseSymm}, we will say that a semi-stable
polynomial $p$ is {\it an Agler denominator} if the rational inner
function $z^{\deg p}\bar{p}(1/z)/p(z)$ is Schur--Agler. Extending
this notion, we will call a semi-stable polynomial $p$ an {\it
eventual Agler denominator} of order $n\in {\mathbb N}_0^d$ if
$z^n \bar{p}(1/z)/p(z)$ is Schur--Agler.

Representations \eqref{eq:repr} may allow for a fresh approach to
the study of the multivariable von Neumann inequality
\eqref{eq:vN}. In Section \ref{Ad}, we examine the discrepancy
between the Schur and Schur--Agler classes via (eventual) Agler
denominators. It is shown that (i) not every (semi-)stable
polynomial is an Agler denominator, (ii) if $q$ is a polynomial in
$\mc{SA}_d$  with $q(0)=0$, then $1-q$ admits a representation
\eqref{eq:repr} for some $n \in {\mathbb N}_0^d$ and $K$ a
contraction, and (iii) (building on results in Section
\ref{wedge}) if $p$ is representable in the form \eqref{eq:repr}
for some $n \in {\mathbb N}_0^d$ and contractive $K$, then $p$ is
an eventual Agler denominator of order $n$. As a corollary, we
deduce that every semi-stable linear polynomial is an Agler
denominator, thus solving a problem suggested in \cite{KneseSymm}.
To illustrate a possible advantage of our approach,
 we compare
 a minimal determinantal representation \eqref{eq:repr} to
a minimal transfer-function realization \eqref{eq:tf}
 in Remark \ref{rem:linear}.

In Section \ref{sec6}, we revisit the Kaijser--Varopoulous--Holbrook example to build a family of polynomials
in $\mc S_d\setminus\mc{SA}_d$, for every odd $d\ge3$.
If $d=3$, this leads to a slightly improved bound for the von Neumann constant.


\section{Unconstrained determinantal representations}\label{sec:affine-det-repr}

\begin{thm}\label{thm:k-repr}
Every $p\in\mathbb{C}[z_1,\ldots,z_d]$, with $p(0)=1$,
 admits a representation \eqref{eq:repr} for some $n\in\mathbb{N}_0^d$ and
some $K\in\mathbb{C}^{|n|\times |n|}$.
\end{thm}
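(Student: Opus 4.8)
The plan is to reduce the problem to the multi-affine case, where a representation with $n=(1,\ldots,1)$ always exists because the coefficients $c_\alpha$ of a multi-affine polynomial can be freely prescribed as principal minors of a suitable matrix. More precisely, I would first observe that a generic multi-affine polynomial is realized by an upper-triangular-plus-rank-one type construction: take $K$ to have the form that makes $\det K[\alpha] = \prod_{i \in \alpha}(\text{something})$ adjustable, or better, recall the classical fact that an arbitrary assignment of the $2^d-1$ principal minors is achievable if one does not insist on a $d \times d$ matrix. In fact the cleanest route is not to solve the Principal Minor Assignment Problem for a $d \times d$ matrix at all (that is overdetermined), but to allow $n$ larger than $(1,\ldots,1)$.

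The key idea I would pursue is a factorization/additive decomposition argument. Write $p = 1 - q$ where $q(0)=0$, and expand $q = \sum_{k \neq 0} a_k z^k$ as a finite sum of monomials. For a single monomial term, $1 - a z^k = 1 - a z_1^{k_1}\cdots z_d^{k_d}$ has an easy representation: one builds a single nilpotent Jordan-type block (a cyclic weighted shift) on $\mathbb{C}^{|k|}$ whose characteristic-type determinant $\det(I - KZ)$ produces exactly $1 - a z^k$; concretely $K$ routes through $k_1$ copies of $z_1$, then $k_2$ copies of $z_2$, etc., in a cycle, with the product of the weights equal to $a$. The harder part is combining monomials: $\det$ is multiplicative, not additive, so I cannot just take a direct sum of the single-monomial blocks. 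Instead I would use the standard trick that $\det(I - KZ)$ for a block \emph{lower-triangular} $K$ (with respect to the grading) equals the product of the diagonal blocks' determinants, while the strictly-lower-triangular part contributes nothing to the determinant but can be used to... no — the cleaner device is: if $1 - q_1 = \det(I - K_1 Z)$ and $1 - q_2 = \det(I - K_2 Z)$, then $1 - q_1 - q_2 + q_1 q_2 = \det(I - (K_1 \oplus K_2) Z)$, which is the wrong polynomial. So the genuine obstacle is getting \emph{sums} rather than products out of a determinant.

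To overcome that, I would exploit the freedom in the off-diagonal entries of $K$: take $K$ block upper triangular with diagonal blocks $K_1, K_2$ so that $\det(I - KZ) = \det(I - K_1 Z)\det(I - K_2 Z)$ regardless of the off-diagonal block, then correct for the spurious product term. The right formulation is an induction on the number of monomials (or on total degree): assume $1 - q = \det(I - KZ)$ with $K$ of size $N$, and show $1 - q - a z^k$ admits a representation of size $N + |k|$. One constructs the new matrix in block form $\begin{bmatrix} K & * \\ 0 & N_k \end{bmatrix}$ where $N_k$ is the cyclic shift realizing the monomial, and chooses the $*$ block (of rank one, connecting back) so that the cofactor expansion picks up exactly the extra $-a z^k$ and nothing else; this works because the shift block $N_k$ is nilpotent, so $I - N_k Z$ is invertible with polynomial inverse, and the Schur-complement formula $\det(I-KZ) \det\!\bigl(I - N_k Z - (\text{stuff})(I-KZ)^{-1}(\text{stuff})\bigr)$ can be arranged to telescope. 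I expect the bookkeeping of which entries to place where — ensuring the rank-one coupling contributes the single desired monomial and leaves $\det(I-KZ)$'s own value intact — to be the main technical obstacle, but it is a finite linear-algebra construction with no conceptual barrier. Finally, summing over all monomials of $q$ by iterating this step yields the desired $K$ of size $|n|$ for a suitable $n \le$ (something explicit in terms of the support of $p$), completing the proof.
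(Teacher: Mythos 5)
Your proposal takes a genuinely different route from the paper, and unfortunately it has a gap at exactly the point you flagged as ``the genuine obstacle'' and then waved off.

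The paper does not add monomials one at a time. It factors the \emph{whole} polynomial $q=1-p$ at once into a chain
$q(z)=C_0L_1(z)C_1\cdots C_{t-1}L_t(z)C_t$ of rectangular constant matrices interlaced with diagonal variable matrices (Lemma \ref{lem:prod}), and then uses a cyclic block-companion determinant identity (Lemma \ref{lem:det}) to get $p=1-q=\det(I_N-Q(z))$ in one stroke, where $Q(z)=CL(z)$ with $L(z)$ diagonal with entries in $\{1,z_1,\ldots,z_d\}$. A final Schur complement over the constant slots of $L$ converts this to $\det(I_{|n|}-KZ_n)$. There is never an ``additive update'' step; the non-multiplicativity of $q\mapsto 1-q$ is handled once, by the companion structure.

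Your inductive step, by contrast, does not work as written. You propose
$\tilde K=\begin{bmatrix}K & * \\ 0 & N_k\end{bmatrix}$
with $N_k$ nilpotent and $*$ ``rank one, connecting back.'' With a zero lower-left block the matrix is block upper triangular, so
$\det(I-\tilde K\tilde Z)=\det(I-KZ)\det(I-N_k Z_k)=(1-q)\cdot 1=1-q$,
regardless of what you put in the $*$ block; the coupling contributes nothing. The Schur-complement expression you then write down, $\det(I-KZ)\det\bigl(I-N_kZ-(\cdot)(I-KZ)^{-1}(\cdot)\bigr)$, silently assumes a nonzero lower-left block, and even then it does not produce a sum. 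If one does put rank-one blocks $B,C$ in both off-diagonal corners, the Schur complement over the new (nilpotent) block yields
$\det\!\bigl(I-KZ_n-BZ_m(I-N_kZ_m)^{-1}CZ_n\bigr)$, and since the correction is rank one in $\mathbb C^N$ one gets, by the matrix determinant lemma,
$(1-q)\bigl(1-\rho(z)\,w^*Z_n(I-KZ_n)^{-1}v\bigr)$
for some monomial $\rho$ and vectors $v,w$. For this to equal $1-q-az^k$ you would need
$\rho(z)\,w^*Z_n\operatorname{adj}(I-KZ_n)\,v=az^k$,
i.e.\ a specific bilinear form in the adjugate to be a single monomial; generically it is a high-degree polynomial with many terms and there is no way to choose $v,w$ so that all the extra terms cancel. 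In cycle-cover language: any new cycle of weight $az^k$ that is vertex-disjoint from the old ones combines freely with every old cycle cover, producing the product $(1-q)(1-az^k)$, and to kill the cross terms you would need every old cycle to pass through some common vertex that your new cycle also uses — a condition that already fails for $q=z_1+z_2$, since no single variable divides both monomials. (A dummy ``constant'' vertex shared by all cycles, removed at the end by a Schur complement, would fix this — but that is precisely the device built into the paper's $L(z)$ having $1$'s on the diagonal, and you would then be re-deriving the paper's construction.)

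So the conceptual barrier you identified — determinants are multiplicative, not additive — is real and is not resolved by bookkeeping in your scheme. Either replace the one-monomial-at-a-time induction with a single global factorization of $1-p$ as in Lemmas \ref{lem:prod}–\ref{lem:det}, or explicitly introduce a constant ``hub'' vertex through which every monomial cycle is routed and then Schur-complement it away.
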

Note that the $d$-tuple $n$ is not prescribed in the statement,
although a bound on $n$ will be deduced in the proof. If $n$ is
specified, as in the Principal Minor Assignment Problem, Theorem
\ref{thm:k-repr} ensures the solvability of the following problem
for sone $n$ (see also Remark \ref{rem:PMRP}).
\medskip

\noindent \textbf{The Principal Minor Relation Problem.} Let $m\in\mathbb{N}_0^d$, and let $p_k$, $0\le k\le m$, be a collection of complex numbers. Given $n=(n_1,\ldots,n_d)\in\mathbb{N}_0^d$, $n\ge m$,
find a matrix $K\in\mathbb{C}^{|n|\times|n|}$ whose principal minors $K[\alpha_1\cup\cdots\cup\alpha_d]$, indexed
by $\alpha_1\subseteq\{1,\ldots,n_1\}$,
$\alpha_2\subseteq\{n_1+1,\ldots,n_1+n_2\}$, \ldots,
$\alpha_d\subseteq\{n_1+\cdots+n_{d-1}+1,\ldots,|n|\}$,
satisfy the relations
\begin{equation}\label{eq:PMRP}
(-1)^{|k|}\sum_{|\alpha_i|=k_i,\ i=1,\ldots,d}\det
K[\alpha_1\cup\cdots\cup\alpha_d]=p_k,\quad  0\le k\le m.
\end{equation}
When $m=n=(1,\ldots,1)$, this is the classical Principal Minor Assignment Problem mentioned in Section \ref{sec:Intro}.

The following standard result (see, e.g., \cite[Theorem
3.1.1]{Prasolov}) will occasionally be used.
\begin{lem}\label{lem:Schur-compl}
Let $P=\begin{bmatrix} A & B\\
C & D
\end{bmatrix}$ be a block matrix with square matrices $A$ and $D$.
If $\det A\neq 0$, then $\det P=\det A\det(D-CA^{-1}B)$.
Similarly, if $\det D\neq 0$, then $\det P=\det D\det
(A-BD^{-1}C)$.
\end{lem}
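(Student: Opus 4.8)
The plan is to reduce $\det P$ to the determinant of a block-diagonal matrix by block Gaussian elimination, and then apply multiplicativity of the determinant together with the elementary fact that a block-triangular matrix has determinant equal to the product of the determinants of its diagonal blocks.

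Assume first that $\det A\neq 0$. I would verify directly, by multiplying out the right-hand side block by block, the factorization
\begin{equation*}
\begin{bmatrix} A & B\\ C & D\end{bmatrix}=\begin{bmatrix} I & 0\\ CA^{-1} & I\end{bmatrix}\begin{bmatrix} A & 0\\ 0 & D-CA^{-1}B\end{bmatrix}\begin{bmatrix} I & A^{-1}B\\ 0 & I\end{bmatrix}.
\end{equation*}
The two outer factors are block-triangular with identity diagonal blocks, hence each has determinant $1$; the middle factor is block-diagonal with square diagonal blocks $A$ and $D-CA^{-1}B$, so its determinant equals $\det A\cdot\det(D-CA^{-1}B)$. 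Taking determinants on both sides gives $\det P=\det A\,\det(D-CA^{-1}B)$.

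For the second identity, assume $\det D\neq 0$ and argue in the same way using the mirror-image factorization
\begin{equation*}
\begin{bmatrix} A & B\\ C & D\end{bmatrix}=\begin{bmatrix} I & BD^{-1}\\ 0 & I\end{bmatrix}\begin{bmatrix} A-BD^{-1}C & 0\\ 0 & D\end{bmatrix}\begin{bmatrix} I & 0\\ D^{-1}C & I\end{bmatrix}.
\end{equation*}
Alternatively, one may conjugate $P$ by the permutation interchanging the two block rows and columns, which swaps the roles of $A$ and $D$ while leaving $\det P$ unchanged, and then invoke the first identity.

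There is no real obstacle here: the content of the argument is entirely the block arithmetic in the two displayed factorizations, which is a direct computation, together with the determinant formula for block-triangular matrices (itself a consequence of the Laplace expansion, or of the same factorization trick applied recursively). The only point requiring a word of care is that $A$ and $D$ are assumed square precisely so that the diagonal blocks appearing in the middle factors are square and their determinants are defined.
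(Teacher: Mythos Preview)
Your argument is correct and is the standard block-LDU (Aitken) factorization proof of the Schur complement determinant identity; the computation checks out, and your remark about needing $A$ and $D$ square is the right caveat. The paper, however, does not actually supply a proof of this lemma: it merely records it as a well-known fact with a reference to Prasolov's textbook. So there is nothing in the paper to compare against beyond observing that what you wrote is precisely the textbook argument one would find at the cited location.
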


The proof of Theorem \ref{thm:k-repr} will be based on the next
two lemmas.
\begin{lem}\label{lem:prod}
 For every $q\in\mathbb{C}^{a\times b}[z_1,\ldots,z_d]$, there exist natural numbers $s_0=a$, $s_1$,
\ldots, $s_{t-1}$, $s_t=b$, matrices $C_i\in\mathbb{C}^{s_i\times
s_{i+1}}$, and diagonal $s_i\times s_i$ matrix functions $L_i$
with the diagonal entries in $\{1,z_1,\ldots,z_d\}$, such that
\begin{equation}\label{eq:prod}
q(z)=C_0L_1(z)\cdots C_{t-1}L_t(z)C_t.
\end{equation}
The factorization can be chosen so that $t=\tdeg q$.
\end{lem}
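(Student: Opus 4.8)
The plan is to build the factorization \eqref{eq:prod} by induction on $t = \tdeg q$, treating $q$ as a matrix whose entries are polynomials of total degree at most $t$. First I would handle the base case $t = 0$: here $q$ is a constant matrix $C_0 \in \mathbb{C}^{a \times b}$, so we take $s_0 = a$, $s_1 = b$, and $q = C_0$ with the empty product of $L_i C_i$ factors. For the inductive step, suppose the claim holds for all matrix polynomials of total degree $< t$, and let $q$ have total degree $t \ge 1$. The key idea is to peel off one layer of degree by writing $q(z) = q(0) + \sum_{j=1}^{d} z_j\, q_j(z)$ where each $q_j \in \mathbb{C}^{a \times b}[z]$ has $\tdeg q_j \le t-1$; this is just grouping the monomials of $q - q(0)$ according to which variable we factor out (any fixed choice works). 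Assembling the $q_j$ into a single wide matrix and factoring out the variables via a diagonal matrix function is what produces the recursive structure.

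Concretely, I would write $q(z) = \begin{bmatrix} I_a & z_1 I_a & \cdots & z_d I_a \end{bmatrix} \begin{bmatrix} q(0) \\ q_1(z) \\ \vdots \\ q_d(z) \end{bmatrix}$, so that the left factor has the form $C_0 L_1(z)$ — actually $C_0 = \begin{bmatrix} I_a & \cdots & I_a\end{bmatrix}$ is the constant row of $d+1$ blocks and $L_1(z) = I_a \oplus z_1 I_a \oplus \cdots \oplus z_d I_a$ is diagonal with entries in $\{1, z_1, \ldots, z_d\}$ — and the right factor $\widetilde q(z) := \col(q(0), q_1(z), \ldots, q_d(z))$ is an $((d+1)a) \times b$ matrix polynomial of total degree $\le t-1$. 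By the inductive hypothesis, $\widetilde q$ factors as $\widetilde C_0 \widetilde L_1 \widetilde C_1 \cdots \widetilde L_{t-1} \widetilde C_{t-1}$ with the stated properties (using $t-1$ diagonal layers). Prepending $C_0$ and $L_1$ gives \eqref{eq:prod} with exactly $t$ diagonal factors $L_1, \ldots, L_t$, matching $t = \tdeg q$; the inner dimensions $s_1, \ldots, s_{t-1}$ are whatever the recursion produces, with $s_0 = a$ and $s_t = b$ preserved.

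I expect the main point requiring care — rather than a deep obstacle — is bookkeeping the dimensions and the off-by-one in the number of factors so that the final count of diagonal matrices is precisely $\tdeg q$ and not, say, $\tdeg q + 1$; the base case $t=0$ must genuinely have no $L_i$ factors, and one should check that degree $1$ (where $q$ is affine, $q = C_0 L_1 C_1$) comes out right as the first nontrivial instance. One should also verify that the construction never forces $\tdeg \widetilde q = t$ rather than $t-1$: since each monomial of $z_j q_j(z)$ has degree $\ge 1$ in $z_j$ and total degree $\le t$, the polynomial $q_j$ itself has total degree $\le t-1$, and $q(0)$ is constant, so $\tdeg \widetilde q \le t-1$ indeed. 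Everything else is routine block-matrix manipulation.
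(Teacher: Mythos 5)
Your proposal is correct and is essentially the paper's argument: induction on total degree, writing $q = q_0 + \sum_{j} z_j q_j$ and encoding this as a block product with a diagonal factor whose entries lie in $\{1,z_1,\ldots,z_d\}$. The only cosmetic difference is that you apply the inductive hypothesis to the tall column block $\col(q_0,q_1,\ldots,q_d)$ and prepend the new factors $C_0 L_1$, whereas the paper applies it to the wide row block $[\,q_0\ q_1\ \cdots\ q_d\,]$ and appends $L_t C_t$; these are mirror images of the same construction.
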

\begin{proof}
We apply induction on $t$. If $t=0$, then \eqref{eq:prod} holds
trivially with $C_0=q(z)$. Suppose a representation
\eqref{eq:prod} exists for every matrix polynomial in $z_1$,
\ldots, $z_d$ of total degree $t-1$. Then a polynomial
$q\in\mathbb{C}^{a\times b}[z_1,\ldots,z_d]$ of total degree $t$
can be represented in the form
\begin{multline*}
q(z)=q_0+z_1q_1(z)+\cdots +z_dq_d(z)\\
=\begin{bmatrix} q_0 & q_1(z) & \ldots & q_d(z)
\end{bmatrix}
\begin{bmatrix}
I_b & 0        & \ldots & 0\\
0   & z_1I_b   & \ddots & \vdots\\
\vdots & \ddots & \ddots & 0\\
0 & \ldots & 0 & z_dI_b
\end{bmatrix}
\begin{bmatrix}
I_b\\
I_b\\
\vdots\\
I_b
\end{bmatrix},
\end{multline*}
where $\begin{bmatrix} q_0 & q_1(z) & \ldots & q_d(z)
\end{bmatrix}\in\mathbb{C}^{a\times (d+1)b}[z_1,\ldots,z_d]$ is a polynomial of total degree $t-1$. By assumption,
we have
$$\begin{bmatrix} q_0 & q_1(z) & \ldots & q_d(z)
\end{bmatrix}=C_0L_1(z)\cdots C_{t-2}L_{t-1}(z)C_{t-1},$$
which gives $q(z)=C_0L_1(z)\cdots C_{t-1}L_t(z)C_t$, with
$$L_t=\begin{bmatrix}
I_b & 0        & \ldots & 0\\
0   & z_1I_b   & \ddots & \vdots\\
\vdots & \ddots & \ddots & 0\\
0 & \ldots & 0 & z_dI_b
\end{bmatrix},\quad C_t=\begin{bmatrix}
I_b\\
I_b\\
\vdots\\
I_b
\end{bmatrix}.$$
\end{proof}

\begin{lem}\label{lem:det}
Let $A_i\in\mathbb{C}^{s_i\times s_{i+1}}$, $i=0,\ldots,t-1$, and
$A_t\in\mathbb{C}^{s_t\times s_0}$, where $s_0=a$. Then
\begin{equation}\label{eq:det}
\det\begin{bmatrix}
I_a & -A_0 & 0        & \ldots & 0\\
0   & I_{s_1} & \ddots  & \ddots & \vdots\\
\vdots & \ddots & \ddots & \ddots & 0\\
0 &  & \ddots & \ddots & -A_{t-1}\\
-A_t & 0 &\ldots & 0 & I_{s_t}
\end{bmatrix}=\det(I_a-A_0\cdots A_t).
\end{equation}
\end{lem}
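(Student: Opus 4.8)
The identity \eqref{eq:det} is a statement about the determinant of a "cyclic bidiagonal" block matrix. My first instinct is to proceed by induction on $t$, eliminating one block at a time using the Schur complement formula from Lemma \ref{lem:Schur-compl}. For $t=1$ the matrix is $\begin{bmatrix} I_a & -A_0 \\ -A_1 & I_{s_1}\end{bmatrix}$, and Lemma \ref{lem:Schur-compl} (applied to the block $A=I_a$) gives determinant $\det(I_{s_1}-A_1A_0)=\det(I_a-A_0A_1)$, using the standard fact $\det(I-XY)=\det(I-YX)$. This already reveals a subtlety: the reduction naturally produces the product in a cyclically shifted order, so I need to be careful that the bookkeeping returns $A_0A_1\cdots A_t$ and not some rotation of it — though of course all cyclic rotations have the same nonzero eigenvalues, so $\det(I-A_0\cdots A_t)$ is rotation-invariant and the discrepancy is harmless.

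For the inductive step, I would partition the $(t+1)\times(t+1)$ block matrix $P$ on the left of \eqref{eq:det} so that $D=I_{s_t}$ is the trailing diagonal block and $A$ is the leading $t\times t$ block-principal submatrix. Since $\det D = 1 \neq 0$, Lemma \ref{lem:Schur-compl} gives $\det P = \det(A - B D^{-1} C)$. Here $B$ is the column whose only nonzero block is $-A_{t-1}$ in the last block-row, $C$ is the row whose only nonzero block is $-A_t$ in the first block-column, so $B D^{-1} C = B C$ is the $t\times t$ block matrix with a single nonzero block $A_{t-1}A_t$ in position $(t,1)$ (last block-row, first block-column). Therefore $A - BC$ is again a cyclic bidiagonal matrix of the same shape but with one fewer factor: its super-diagonal blocks are $-A_0,\ldots,-A_{t-2}$ and its corner block is $-(A_{t-1}A_t)$. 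Wait — I should double-check the sign: the corner entry of $A$ is $0$ and we subtract $BC$ which has $+A_{t-1}A_t$ there, giving $-A_{t-1}A_t$, exactly matching the pattern with $A_{t-1}$ replaced by the merged block $A_{t-1}A_t$. Applying the induction hypothesis to this $t$-fold cyclic matrix (with the list of off-diagonal blocks $A_0,\ldots,A_{t-2},A_{t-1}A_t$) yields $\det(I_a - A_0\cdots A_{t-2}(A_{t-1}A_t)) = \det(I_a - A_0\cdots A_t)$, closing the induction.

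The main obstacle I anticipate is purely notational rather than mathematical: making the block-row/block-column indexing of $B$, $C$, and the Schur complement $A - BC$ precise enough that the reader sees the merged matrix is genuinely of the same cyclic form, including that the sizes line up ($A_{t-1}A_t \in \mathbb{C}^{s_{t-1}\times s_0}$, which is the correct corner-block size for the reduced problem with $s_0=a$). An alternative, slicker route that avoids induction entirely: write the big matrix as $I - N$ where $N$ is the "cyclic shift" operator built from the $A_i$, and expand $\det(I-N)$ via the permutation definition of the determinant; the only permutations contributing nonzero terms are those supported on full cycles through all $t+1$ block slots, which forces exactly the single term $(-1)^{t}\operatorname{tr}$-type contribution reassembling into $\det(I_a - A_0\cdots A_t)$ after matching with the expansion of the right-hand side. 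I would present the inductive proof as the primary argument since it is the most transparent and self-contained given Lemma \ref{lem:Schur-compl}, and it mirrors the recursive structure already used in the proof of Lemma \ref{lem:prod}.
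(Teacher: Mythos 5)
Your proposal is correct and takes essentially the same route as the paper: both proceed by induction on $t$, eliminating the trailing block $I_{s_t}$ via the Schur complement (Lemma \ref{lem:Schur-compl}) so that $A_{t-1}$ and $A_t$ merge into the single corner block $-A_{t-1}A_t$, and then invoking the inductive hypothesis. The only cosmetic difference is in the base case, where you pivot on $I_a$ and then appeal to $\det(I-XY)=\det(I-YX)$, whereas the paper pivots on $I_{s_1}$ directly and gets $\det(I_a-A_0A_1)$ in one step.
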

\begin{proof}
We apply induction on $t\ge 1$. For $t=1$,  Lemma
\ref{lem:Schur-compl} gives
$$\det\begin{bmatrix}
I_a & -A_0\\
-A_1 & I_{s_1}
\end{bmatrix}=\det(I_a-A_0A_1).$$
 Suppose \eqref{eq:det} holds for $t-1$ in the place of $t$. Then,
 again by Lemma
\ref{lem:Schur-compl},
\begin{multline*}
\det\begin{bmatrix}
I_a & -A_0 & 0        & \ldots & 0\\
0   & I_{s_1} & \ddots  & \ddots & \vdots\\
\vdots & \ddots & \ddots & -A_{t-2} & 0\\
0 & \ldots & 0 & I_{s_{t-1}} & -A_{t-1}\\
-A_t & 0 &\ldots & 0 & I_{s_t}
\end{bmatrix}\\
=\det\left(\begin{bmatrix}
I_a & -A_0 & 0        & \ldots & 0\\
0   & I_{s_1} & \ddots  & \ddots & \vdots\\
\vdots & \ddots & \ddots & \ddots & 0\\
\vdots & & \ddots & \ddots & -A_{t-2}\\
0 & \ldots &\ldots & 0 & I_{s_{t-1}}
\end{bmatrix}-\begin{bmatrix}
0  \\
\vdots \\
0\\
-A_{t-1}
\end{bmatrix}\begin{bmatrix}
-A_t & 0 &\ldots & 0
\end{bmatrix}\right)\\
=\det\begin{bmatrix}
I_a & -A_0 & 0        & \ldots & 0\\
0   & I_{s_1} & \ddots  & \ddots & \vdots\\
\vdots & \ddots & \ddots & \ddots & 0\\
0 &  & \ddots & \ddots & -A_{t-2}\\
-A_{t-1}A_t & 0 &\ldots & 0 & I_{s_{t-1}}
\end{bmatrix}\\
=\det(I_a-A_0\cdots A_{t-1}A_t).
\end{multline*}
\end{proof}

\begin{proof}[Proof of Theorem \ref{thm:k-repr}]
Applying Lemma \ref{lem:prod} to $q=1-p$, we obtain
$$p(z)=1-C_0L_1(z)\cdots C_{t-1}L_t(z)C_t$$
(here $a=b=1$). So, by Lemma \ref{lem:det},
$$p(z)=\det(I_N-Q(z)),$$
where $N=1+s_1+\cdots+s_t$ and
$$Q(z)=\begin{bmatrix}
0 & C_0L_1(z) & 0 & \ldots  & 0\\
\vdots & \ddots & \ddots & \ddots & \vdots\\
\vdots &   & \ddots     & \ddots & 0 \\
0 & \ldots & \ldots & 0 & C_{t-1}L_t(z)\\
C_t & 0 & \ldots & \ldots & 0
\end{bmatrix}.$$
Since $Q(z)$ factors as $C\cdot L(z)$, where
\begin{equation}\label{eq:CL}
C=\begin{bmatrix}
0 & C_0 & 0 & \ldots  & 0\\
\vdots & \ddots & \ddots & \ddots &\vdots\\
\vdots &   &   \ddots   & \ddots & 0 \\
0 & \ldots & \ldots & 0 & C_{t-1}\\
C_t & 0  & \ldots & \ldots & 0
\end{bmatrix},\ L(z)=\begin{bmatrix}
1 & 0 &  \ldots  & 0\\
0 & L_1(z) & \ddots & \vdots\\
\vdots &  \ddots & \ddots & 0\\
0 & \ldots & 0 & L_t(z)
\end{bmatrix},\end{equation}
it may be written in the form
$$Q(z)=TGT^{-1}\cdot T\begin{bmatrix}
I_{N-|n|} & 0\\
0 & Z_n
\end{bmatrix}T^{-1}$$ where $G=T^{-1}CT\in\mathbb{C}^{N\times N}$ and $T$ is a permutation matrix. Representing $G$
as a $2\times 2$ block matrix, we obtain that
$$p(z)=\det\begin{bmatrix}
I_{N-|n|}-G_{11} & -G_{12}Z_n\\
-G_{21} & I_{|n|}-G_{22}Z_n
\end{bmatrix}.$$
Hence $\det(I_{N-|n|}-G_{11})=p(0)=1$ and, in particular,  the
matrix $I_{N-|n|}-G_{11}$ is invertible. Therefore, by Lemma
\ref{lem:Schur-compl},
$$p(z)=\det(I_{|n|}-G_{22}Z_n-G_{21}(I_{N-|n|}-G_{11})^{-1}G_{12}Z_n)=\det(I_{|n|}-KZ_n)$$
 with $K=G_{22}+G_{21}(I_{N-|n|}-G_{11})^{-1}G_{12}$.
\end{proof}

\section{Constrained determinantal representations}\label{sec:constr-det-repr}

We will now look into the existence of a determinantal representation \eqref{eq:repr} with a norm constraint on the matrix $K$.
First, we give norm-constrained versions of Lemma \ref{lem:prod} and Theorem \ref{thm:k-repr}.
\begin{lem}\label{lem:prod-constr}
For every polynomial $q\in
\mathcal{SA}_d(\mathbb{C}^b,\mathbb{C}^a)$, a factorization
\eqref{eq:prod} exists with constant contractive matrices $C_i$,
$i=0,\ldots,t$, where $t\ge \tdeg q$.
\end{lem}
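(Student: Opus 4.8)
The plan is to upgrade the inductive construction in Lemma \ref{lem:prod} so that, at each step, the constant matrices produced are contractions. The starting point is that a polynomial $q\in\mathcal{SA}_d(\mathbb C^b,\mathbb C^a)$, being in the Schur--Agler class, admits an Agler decomposition / transfer-function realization: there is a unitary (or at least contractive) colligation
$$
\begin{bmatrix} D_q & C_q \\ B_q & A_q \end{bmatrix},
$$
acting on $\mathbb C^a\oplus\mathcal H$ with $\mathcal H=\mathcal H_1\oplus\cdots\oplus\mathcal H_d$ an auxiliary Hilbert space (finite-dimensional, since $q$ is a polynomial, by a standard argument truncating the de~Branges--Rovnyak--type space to a finite-rank positive kernel), such that
$$
q(z)=D_q+C_q Z_{\mathcal H}(I-A_q Z_{\mathcal H})^{-1}B_q ,
$$
where $Z_{\mathcal H}=\bigoplus_{i=1}^d z_i I_{\mathcal H_i}$. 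First I would expand the resolvent as a Neumann series $ (I-A_qZ_{\mathcal H})^{-1}=\sum_{j\ge0}(A_qZ_{\mathcal H})^j$; because $q$ is a polynomial of total degree at most $\tdeg q$, the series is effectively finite, and one obtains $q$ as an explicit finite product of the blocks of a \emph{contractive} (indeed unitary) matrix interleaved with the diagonal coordinate-variable matrix $Z_{\mathcal H}$. Reading off this product as $C_0L_1(z)\cdots C_{t-1}L_t(z)C_t$ is then essentially bookkeeping: the $C_i$ are submatrices of the unitary colligation (hence contractions), the $L_i(z)$ are diagonal with entries in $\{1,z_1,\dots,z_d\}$, and the length $t$ of the product equals the truncation order of the Neumann series, which can be taken to be $\tdeg q$ (or larger, hence the statement $t\ge\tdeg q$).

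An alternative, more self-contained route is to redo the induction in the proof of Lemma \ref{lem:prod} directly, carrying the contractivity as an invariant. The issue there is that the block factorization used in Lemma \ref{lem:prod},
$$
q=\begin{bmatrix} q_0 & q_1 & \cdots & q_d\end{bmatrix}\,
\big(I_b\oplus z_1I_b\oplus\cdots\oplus z_dI_b\big)\,
\begin{bmatrix} I_b\\ \vdots\\ I_b\end{bmatrix},
$$
does not preserve norms — the right-hand column has norm $\sqrt{d+1}$, and the row block need not be contractive even if $q$ is. So the inductive hypothesis must be stated not for an arbitrary Schur--Agler polynomial but in a normalized colligation form: assume $q$ of total degree $t$ can be written as $C_0L_1\cdots C_{t-1}L_tC_t$ with all $C_i$ contractive, where now the $L_i$ are allowed to contain extra $1$'s (to absorb the $I_b$ blocks) and the step that splits off one more factor of $Z$ is arranged as a unitary dilation: given a contractive $\begin{bmatrix}q_0 & q_1 & \cdots & q_d\end{bmatrix}$-type block of lower degree together with the Agler positivity certificate, complete it to a contractive (or unitary) $2\times2$ block matrix by a standard Schur-complement / Parrott-type completion, and peel off the coordinate diagonal. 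The Agler decomposition of $q$ is exactly what guarantees such completions exist at every stage with uniformly contractive blocks; without it (i.e. for $q$ merely in $\mathcal S_d$), the construction would break down for $d\ge3$, which is consistent with the hypothesis $q\in\mathcal{SA}_d$.

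I expect the main obstacle to be the careful alignment of the two degree bookkeepings: the Neumann-series truncation order versus the total degree of $q$, and the fact that the colligation realization naturally produces a product whose "length" is governed by the maximal power of $Z_{\mathcal H}$ appearing, which is $\ge\tdeg q$ but may exceed it when the realization is not minimal — hence one only claims $t\ge\tdeg q$ rather than equality (contrast Lemma \ref{lem:prod}, where $t=\tdeg q$ is achievable since no norm constraint forces the use of a possibly larger colligation). A secondary technical point is justifying that a scalar- or matrix-polynomial in $\mathcal{SA}_d$ has a \emph{finite-dimensional} contractive realization; this follows because the Agler kernel decomposition of a polynomial can be taken with finitely many terms (the associated Hilbert space of the realization can be taken finite-dimensional by compactness/truncation, as $q$ extends analytically past $\overline{\mathbb D}^d$), after which everything is elementary linear algebra. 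Once these are in place, extracting the factorization \eqref{eq:prod} with contractive $C_i$ and diagonal $L_i(z)$ valued in $\{1,z_1,\dots,z_d\}$ is routine, and this lemma then feeds directly into a norm-constrained version of Theorem \ref{thm:k-repr} via Lemma \ref{lem:det} and a (this time contraction-preserving) Schur-complement step.
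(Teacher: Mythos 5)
The paper's proof is a two-line citation: by \cite[Corollary 18.2]{Pau}, a matrix polynomial in $\mathcal{SA}_d(\mathbb{C}^b,\mathbb{C}^a)$ can be written as a finite product of constant contractive matrices and diagonal matrices with \emph{monomials} on the diagonal; and each such diagonal monomial matrix is then factored as a product of diagonal matrices $L_i$ with entries in $\{1,z_1,\dots,z_d\}$, interlacing with $C_i=I$. Your proposal does not invoke this result, and neither of your two suggested routes closes the gap it is meant to fill.

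Your first route has a genuine logical gap. The transfer-function realization
\[
q(z)=D_q+C_qZ_{\mathcal H}(I-A_qZ_{\mathcal H})^{-1}B_q
\]
expands, via the Neumann series, into a \emph{sum}
\[
q(z)=D_q+C_qZ_{\mathcal H}B_q+C_qZ_{\mathcal H}A_qZ_{\mathcal H}B_q+\cdots,
\]
not a product. "Reading off" this sum as $C_0L_1(z)\cdots C_{t-1}L_t(z)C_t$ is not bookkeeping: converting a polynomial presented as a (finite) sum into a single alternating product of constant blocks and diagonal coordinate matrices, \emph{while keeping every constant block a contraction}, is precisely the nontrivial content of the lemma. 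Lemma \ref{lem:prod} performs the sum-to-product step, but — as you yourself observe in your second paragraph — it destroys any norm control. So your first route silently reintroduces the very problem it set out to solve. Your second route (inducting with Parrott-type completions, citing the Agler positivity certificate as the reason completions exist) is closer in spirit to what is actually needed, but it is entirely unsubstantiated: you do not explain how the Agler decomposition is to be "carried along" the induction or why the one-step completion can always be chosen contractive, and the claim that the Schur--Agler hypothesis is "exactly what guarantees" this is asserted rather than proved. You also assert, without proof, that a polynomial in $\mathcal{SA}_d(\mathbb{C}^b,\mathbb{C}^a)$ has a \emph{finite-dimensional} realization; while this is true, it is a further nontrivial step that your sketch leans on.

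In short, the missing idea is the product (factorization) characterization of the Schur--Agler unit ball from \cite[Corollary 18.2]{Pau}; once that is cited, the lemma reduces to the observation that any diagonal matrix of monomials factors into matrices $L_i$ of the required form. Without that result, the heart of the lemma — the passage from the additive Agler realization to the multiplicative factorization \eqref{eq:prod} with contractive factors — remains unproved in your proposal.
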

\begin{proof}
Let $q\in \mathcal{SA}_d(\mathbb{C}^b,\mathbb{C}^a)$ be a
polynomial. By \cite[Corollary 18.2]{Pau}, $q$ can be written as a
product of constant contractive matrices and diagonal matrices
with monomials on the diagonal. Every such diagonal matrix is, in
turn, a product of matrices $L_i$ as in \eqref{eq:prod}
(interlacing with $C_i=I$).
\end{proof}
\begin{thm}\label{thm:q-matrix}
Let $p$ be a polynomial of the form $p(z)=\det(I_N-q(z))$, where
$q$ is a  Schur--Agler polynomial with matrix coefficients,
i.e., $q\in\mathbb{C}^{N\times N}[z_1,\ldots,z_d]$ and
$q\in\mathcal{SA}_d(\mathbb{C}^N)$. If $p(0)=1$, then \eqref{eq:repr} holds
with $K$ a contraction.
\end{thm}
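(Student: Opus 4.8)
The plan is to mimic the proof of Theorem \ref{thm:k-repr}, but starting from the norm-constrained factorization provided by Lemma \ref{lem:prod-constr} in place of the unconstrained one from Lemma \ref{lem:prod}, and then to track the contractivity through every step of the construction. First I would apply Lemma \ref{lem:prod-constr} to the matrix polynomial $q\in\mathcal{SA}_d(\mathbb{C}^N)$, obtaining a factorization $q(z)=C_0L_1(z)\cdots C_{t-1}L_t(z)C_t$ with each $C_i$ a contraction and each $L_i$ a diagonal matrix function with entries in $\{1,z_1,\ldots,z_d\}$. As in the proof of Theorem \ref{thm:k-repr}, I then form the block matrices $C$ and $L(z)$ as in \eqref{eq:CL} (now with blocks of size $N$ instead of scalars) and use Lemma \ref{lem:det} to write $p(z)=\det(I_M-Q(z))$ with $Q(z)=C\cdot L(z)$, where $M=N+s_1+\cdots+s_t$. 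Here $C$ is a block permutation of $C_0\oplus\cdots\oplus C_t$, hence still a contraction, and $L(z)$ is unitarily equivalent (via a permutation) to a matrix of the form $I_{M-|n|}\oplus Z_n$ for a suitable $n\in\mathbb{N}_0^d$.

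Next, exactly as before, I conjugate by the permutation matrix $T$ that sorts the variables, setting $G=T^{-1}CT$, and write $G$ as a $2\times2$ block matrix with respect to the splitting $M=(M-|n|)+|n|$. Conjugation by a permutation is unitary, so $G$ is still a contraction; in particular $\|G_{11}\|\le 1$ and, since $\det(I_{M-|n|}-G_{11})=p(0)=1\neq0$, the block $I_{M-|n|}-G_{11}$ is invertible. The Schur-complement identity (Lemma \ref{lem:Schur-compl}) then yields $p(z)=\det(I_{|n|}-KZ_n)$ with
$$K=G_{22}+G_{21}(I_{M-|n|}-G_{11})^{-1}G_{12}.$$
The main point, and the only place where something genuinely new is needed, is to verify that this $K$ is a contraction. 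This $K$ is precisely the value at the identity of the transfer function associated to the contractive colligation $G$, compressed appropriately; the standard fact is that if $G=\begin{bmatrix}G_{11}&G_{12}\\ G_{21}&G_{22}\end{bmatrix}$ is a contraction and $I-G_{11}$ is invertible, then $G_{22}+G_{21}(I-G_{11})^{-1}G_{12}$ is again a contraction (it is the transfer function at the point $1$ of the Schur-class function $w\mapsto G_{22}+wG_{21}(I-wG_{11})^{-1}G_{12}$, evaluated radially from inside the disk). I would either cite this directly or derive it quickly from a Schur-complement/positivity computation: the contractivity of $G$ is equivalent to a positive-semidefiniteness condition on $I-G^*G$, and taking the Schur complement with respect to the $(1,1)$ block produces exactly the inequality $I-K^*K\ge0$.

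I expect the routine part to be the bookkeeping of block sizes and permutations, which is identical to the proof of Theorem \ref{thm:k-repr}, and the one substantive step to be the contractivity of the Schur-complement expression for $K$. That step, however, is a well-known lemma about contractive colligations and transfer functions (and is essentially the same computation underlying the Schur--Agler realization theory referenced via \eqref{eq:tf}), so it presents no real obstacle; I would phrase it as a short self-contained lemma or cite \cite{Pau} / standard realization theory. One minor care point: Lemma \ref{lem:prod-constr} gives $t\ge\tdeg q$ rather than equality, so $n$ is not tied to $\deg p$, but the statement of Theorem \ref{thm:q-matrix} only asserts existence of \emph{some} representation \eqref{eq:repr}, so this causes no difficulty.
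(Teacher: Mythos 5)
Your proposal is correct and follows essentially the same route as the paper: apply Lemma \ref{lem:prod-constr} to get contractive $C_i$, build $C$, $L(z)$, and $G=T^{-1}CT$ exactly as in the proof of Theorem \ref{thm:k-repr}, observe $G$ is a contraction, and conclude that $K=G_{22}+G_{21}(I-G_{11})^{-1}G_{12}$ is a contraction. The only difference is in how the last step is justified: you cite the standard fact that a Schur-complement feedthrough of a contractive colligation is contractive (via radial limits of the associated Schur function, or a positivity computation), whereas the paper gives the elementary ``closed-loop mapping'' argument in full --- for $u\in\mathbb{C}^{|n|}$ set $x=(I-G_{11})^{-1}G_{12}u$, note that $G\left[\begin{smallmatrix}x\\u\end{smallmatrix}\right]=\left[\begin{smallmatrix}x\\Ku\end{smallmatrix}\right]$, so contractivity of $G$ gives $\|x\|^2+\|Ku\|^2\le\|x\|^2+\|u\|^2$ and hence $\|Ku\|\le\|u\|$. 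This is cleaner than invoking radial boundary values, and is in fact what your ``positivity computation'' amounts to once spelled out; the phrase ``Schur complement of $I-G^*G$'' is slightly off as stated (one compresses $I-G^*G$ by $\left[\begin{smallmatrix}(I-G_{11})^{-1}G_{12}\\I\end{smallmatrix}\right]$ rather than taking a Schur complement), but the mathematical content is the same.
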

\begin{proof}
By Lemma \ref{lem:prod-constr}, the matrices $C_i$ in the
factorization \eqref{eq:prod} can be chosen contractive. Then the
 matrix $G$ as in the proof of Theorem \ref{thm:k-repr} is also contractive, and by the standard closed-loop
 mapping argument, $K$ is contractive as well.
 For reader's convenience, we include this argument.

 Given $u\in\mathbb{C}^{|n|}$, the vector equation
 $$\begin{bmatrix}
 G_{11} & G_{12}\\
 G_{21} & G_{22}
 \end{bmatrix}\begin{bmatrix}
 x\\
 u
 \end{bmatrix}=\begin{bmatrix}
 x\\
 y
 \end{bmatrix}$$
in $x\in\mathbb{C}^{N-|n|}$ and $y\in\mathbb{C}^{|n|}$ has a
unique solution
 $x=(I_{N-|n|}-G_{11})^{-1}G_{12}u$, $y=(G_{22}+G_{21}(I_{N-|n|}-G_{11})^{-1}G_{12})u=Ku$. Since $G$ is a
 contraction, we have $\|x\|^2+\|y\|^2\le\|x\|^2+\|u\|^2$, i.e., $\|y\|\le\|u\|$. Since $u\in\mathbb{C}^{|n|}$ is
 arbitrary, $K$ is a contraction as claimed.
\end{proof}
\begin{cor}\label{cor:q-scalar}
Let $p\in\mathbb{C}[z_1,\ldots,z_d]$, with $p(0)=1$, be such that
$q=1-p\in\mathcal{SA}_d$, then \eqref{eq:repr} holds with $K$ a
contraction.
\end{cor}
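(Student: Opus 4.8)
The plan is to observe that Corollary~\ref{cor:q-scalar} is precisely the scalar ($N=1$) instance of Theorem~\ref{thm:q-matrix}. A scalar polynomial $q\in\mathcal{SA}_d$ is, by the conventions fixed in Section~\ref{sec:Intro}, an element of $\mathcal{SA}_d(\mathbb{C}^1)$ and belongs to $\mathbb{C}^{1\times1}[z_1,\ldots,z_d]$. Writing $p=1-q$, we have the identity $p(z)=\det\bigl(I_1-q(z)\bigr)$, so $p$ has exactly the form required in Theorem~\ref{thm:q-matrix} with $N=1$, and the hypothesis $p(0)=1$ supplies the normalization needed there. Thus Theorem~\ref{thm:q-matrix} applies verbatim and produces some $n\in\mathbb{N}_0^d$ and a contraction $K$ with $p(z)=\det(I_{|n|}-KZ_n)$, which is \eqref{eq:repr}.

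No genuine difficulty arises here: the only points to verify are the harmless identifications of a scalar with a $1\times1$ matrix and of $1-q$ with $\det(I_1-q)$, after which the conclusion is immediate. If one prefers a self-contained argument rather than a citation of Theorem~\ref{thm:q-matrix}, one can unwind it: apply Lemma~\ref{lem:prod-constr} to $q$ with $a=b=1$ to obtain a factorization $q=C_0L_1(z)\cdots C_{t-1}L_t(z)C_t$ with contractive matrices $C_i$, then run the construction in the proof of Theorem~\ref{thm:k-repr}, observing as in the proof of Theorem~\ref{thm:q-matrix} that the block matrix $G$ is a contraction and hence, by the closed-loop mapping argument, so is $K=G_{22}+G_{21}(I_{N-|n|}-G_{11})^{-1}G_{12}$.
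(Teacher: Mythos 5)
Your proposal is correct and matches the paper's intent: the corollary is stated immediately after Theorem~\ref{thm:q-matrix} with no separate proof, precisely because it is the $N=1$ specialization, which is exactly what you observe. The optional unwinding via Lemma~\ref{lem:prod-constr} and the closed-loop argument is also faithful to how Theorem~\ref{thm:q-matrix} is proved in the paper.
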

\begin{rem}\label{rem:converse-false}
The converse to Corollary \ref{cor:q-scalar} is false. Indeed, if $d=1$ and $\sqrt{2}-1<|a|\le 1$, then $p(z)=(1-az)^2$
satisfies \eqref{eq:repr} with $K=aI_2$, obviously a contraction. However,
$\|1-p\|_\mathcal{A}=\|1-p\|_\infty=2|a|+|a|^2>1$.
\end{rem}

Define the \emph{stability radius} $s(p)$ of a $d$-variable polynomial $p$ to be
$$s(p)= \sup\bigg\{ r>0\colon p(z)\ne0,\ z\in r{\mathbb D}^d\bigg\}.$$
Clearly, $p$ is semi-stable if $s(p)\ge 1$, and stable if
$s(p)>1$. It is easy to see that $\|K\|\ge1/s(p)$ whenever $p$
admits \eqref{eq:repr}.
\begin{rem} With respect to a given subalgebra $\Delta\subseteq {\mathbb C}^{d\times d}$, the structured singular value $\mu_{\Delta}(K)$
of a matrix $K\in\mathbb C^{d\times d}$ is defined to be
$$ \mu_{\Delta}(K) := \left(\inf \big\{ \| Z \|\colon Z \in \Delta \ {\rm and } \ \det (I-KZ) = 0 \big\} \right)^{-1}.$$
The theory of structured singular values was introduced in \cite{Doyle} to analyze linear systems with structured uncertainties; for an
overview, see for instance \cite[Chapter 10]{Zhou}. If $p$ satisfies \eqref{eq:repr} and
$\Delta=\{ Z_n=\bigoplus_{i=1}^dz_iI_{n_i}\colon \ z \in {\mathbb C}^d \}$, we recognize that $\mu_{\Delta}(K)=1/s(p)$.
\end{rem}
The next theorem gives a way of constructing a representation \eqref{eq:repr} with a
certain upper bound on the norm of $K$.
\begin{thm}\label{thm:norm-bds}
Given a polynomial $p(z)=1+\sum_{k\in S}p_kz^k$, where
$S\subseteq\mathbb{N}_0^d\setminus\{0\}$ and the coefficients
$p_k, k\in S,$ are nonzero, let $t=\tdeg p$, $n=\sum_{k\in S}k$,
and $\beta=\left(\sum_{k\,\in\,S}|p_k|\right)^{\frac{1}{t+1}}$.
Then $p$ admits a representation \eqref{eq:repr} with
$K\in\mathbb{C}^{|n|\times|n|}$, and
\begin{equation}\label{eq:k-bound}
\|K\|\le\beta\max\left\{\sqrt{(\beta^2-1)(1+\beta+\cdots+\beta^{\kappa-1})^2+1},\ 1\right\}
\end{equation}
for some integer $\kappa$, $1\le\kappa\le t$.
\end{thm}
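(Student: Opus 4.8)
The plan is to run the constructive machinery behind Theorem \ref{thm:k-repr} — Lemmas \ref{lem:prod} and \ref{lem:det}, together with the closed-loop argument of Theorem \ref{thm:q-matrix} — but this time keeping careful track of norms at each step. First I would write $q = 1 - p = \sum_{k\in S} (-p_k) z^k$ and factor it as in Lemma \ref{lem:prod}, producing $q(z) = C_0 L_1(z) \cdots C_{t-1} L_t(z) C_t$ with $t = \tdeg p$ and the $L_i$ diagonal with entries in $\{1, z_1, \dots, z_d\}$. The freedom I want to exploit is scaling: for any $\beta>0$ one can replace each $C_i$ by $C_i/\beta$ and each $L_i(z)$ by $\beta L_i(z)$ without changing the product, and the particular value $\beta = \left(\sum_{k\in S}|p_k|\right)^{1/(t+1)}$ is chosen so that after this rescaling the scalar ``coefficient mass'' $\sum_{k\in S}|p_k|$ of the rescaled $q/\beta^{t+1}$ equals $1$; equivalently, I can choose the factorization of Lemma \ref{lem:prod} so that the combined row/column weights of the $C_i$ are balanced and each individual $C_i$ has a controlled norm (this is where the exponent $t+1$, the number of factors $C_0,\dots,C_t$, enters). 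I would then assemble the big block matrix $C$ and the diagonal matrix $L(z) = I \oplus \beta Z_n$-type structure exactly as in \eqref{eq:CL}, conjugate by the permutation $T$ to split coordinates into the ``$1$'s block'' of size $N-|n|$ and the ``$Z_n$ block'' of size $|n|$, and obtain $p(z) = \det(I_{|n|} - K Z_n)$ with $K = G_{22} + G_{21}(I_{N-|n|}-G_{11})^{-1}G_{12}$, where $G = T^{-1}(\text{rescaled }C)T$.

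The heart of the estimate is then to bound $\|K\|$ in terms of $\|G_{11}\|$, $\|G_{12}\|$, $\|G_{21}\|$, $\|G_{22}\|$ and, crucially, in terms of the \emph{nilpotency depth} of $G_{11}$. The block $C$ coming from \eqref{eq:CL} is block-cyclic-nilpotent, so $G_{11}$ is nilpotent of some index $\kappa+1$ with $1\le\kappa\le t$ (the value of $\kappa$ records how many of the $t+1$ factor-layers land inside the ``$1$'s block'' after the permutation). Hence the Neumann series $(I_{N-|n|}-G_{11})^{-1} = I + G_{11} + \cdots + G_{11}^{\kappa}$ terminates, and, writing $b$ for a uniform bound on $\|G_{ij}\|$ coming from the rescaled factorization (which by the balancing will be $b = \beta$), one gets
\[
\|K\| \le \|G_{22}\| + \|G_{21}\|\,\big(1 + \|G_{11}\| + \cdots + \|G_{11}\|^{\kappa}\big)\,\|G_{12}\|.
\]
A more careful accounting — distinguishing the contribution of the feed-through term $G_{22}$ (norm $\le \beta$) from that of the ``loop'' term (which carries one factor of $\beta$ from $G_{12}$, one from $G_{21}$, and a geometric sum $1+\beta+\cdots+\beta^{\kappa-1}$ from the nilpotent Neumann series applied to $G_{11}$) and then combining the two orthogonally, since in the underlying state-space picture the feed-through and the loop output occupy complementary coordinates — yields the bound $\|K\| \le \beta\sqrt{(\beta^2-1)(1+\beta+\cdots+\beta^{\kappa-1})^2+1}$ in the regime $\beta\ge 1$, while if $\beta\le 1$ the contraction-type estimate of Theorem \ref{thm:q-matrix} already gives $\|K\|\le\beta\le\beta\cdot 1$; taking the maximum of the two produces exactly \eqref{eq:k-bound}.

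The main obstacle I anticipate is \emph{the bookkeeping of which permutation $T$ to use and hence what $\kappa$ is}: one must choose the ordering of the block coordinates in $L(z)$ so that the ``$1$ on the diagonal'' entries are grouped to the front, count how the $t+1$ blocks $C_0,\dots,C_t$ redistribute under $T^{-1}CT$, and verify that the nilpotency index of $G_{11}$ is at most $t$ (so $\kappa\le t$) while $\kappa\ge1$ is automatic. A secondary technical point is justifying the ``orthogonal combination'' that turns the naive sum bound into the cleaner $\sqrt{(\beta^2-1)(\cdots)^2+1}$ form: this requires viewing $K$ as the transfer matrix at ``$Z=\beta I$'' of the system with state-transition matrix $G$ and arguing, via the same closed-loop inequality $\|x\|^2+\|y\|^2\le \|Gx\|^2 + \cdots$ used in Theorem \ref{thm:q-matrix} but now with $G$ merely $\beta$-bounded rather than contractive, that the worst-case amplification splits as claimed. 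Once the permutation/nilpotency count is pinned down and the rescaling is set up so that every $C_i$ has norm $\le\beta^{?}$ with the exponents summing correctly to $t+1$, the inequality \eqref{eq:k-bound} follows by the elementary estimates above.
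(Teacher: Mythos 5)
Your overall plan coincides with the paper's: apply Lemmas \ref{lem:prod} and \ref{lem:det} to $q=1-p$, permute the block-cyclic matrix so that the $1$-entries of $L(z)$ are collected into the upper block, eliminate that block by a Schur complement, and exploit the nilpotency of $G_{11}$ and a closed-loop argument to bound $\|K\|$. Two points, however, need repair before the bound \eqref{eq:k-bound} actually comes out.

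The decisive gap is in the final estimate. Your ``orthogonal combination of feed-through and loop output'' does not produce the stated bound. Combining a feed-through contribution $\|G_{22}\|\le\beta$ with a loop contribution $\|G_{21}\|\,(1+\beta+\cdots+\beta^{\kappa-1})\,\|G_{12}\|\le\beta^2(1+\beta+\cdots+\beta^{\kappa-1})$ as a square root of a sum of squares gives $\beta\sqrt{1+\beta^2(1+\beta+\cdots+\beta^{\kappa-1})^2}$, with $\beta^2$ in place of the theorem's $\beta^2-1$; moreover $y=G_{22}u+G_{21}x$ is a genuine sum, not an orthogonal one, so the heuristic itself is not valid. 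What actually delivers the $\beta^2-1$ is the closed-loop inequality in its sharp form: writing $\begin{bmatrix}x&y\end{bmatrix}^{\!\top}=G\begin{bmatrix}x&u\end{bmatrix}^{\!\top}$ and using $\|G\|=\beta$ (an equality, which forces one to make the explicit choice $C_0=-\beta^{(1-t)/2}\row_{k\in S}[|p_k|^{1/2}]$, $C_1=\cdots=C_{t-1}=\beta I_{|S|}$, $C_t=\beta^{(1-t)/2}\col_{k\in S}[p_k/|p_k|^{1/2}]$ of norm exactly $\beta$; a mere bound $\|G_{ij}\|\le\beta$ on the blocks is not enough), one gets $\|x\|^2+\|y\|^2\le\beta^2(\|x\|^2+\|u\|^2)$, hence $\|y\|^2\le(\beta^2-1)\|x\|^2+\beta^2\|u\|^2$. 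This rearrangement, combined with the Neumann-series bound $\|x\|\le(1+\beta+\cdots+\beta^{\kappa-1})\beta\|u\|$, is precisely what yields \eqref{eq:k-bound}; your proposal gestures at the closed-loop inequality but omits the step that turns the naive $\beta^2$ into $\beta^2-1$.

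A secondary issue is an off-by-one in the nilpotency bookkeeping: you declare $G_{11}$ nilpotent of index $\kappa+1$, which would make the terminating Neumann series $I+G_{11}+\cdots+G_{11}^{\kappa}$ and the geometric sum $1+\beta+\cdots+\beta^{\kappa}$, inconsistent with the bound you then state. The paper arranges the expansion of each monomial $z^k$ so that $L_1(z)$ has no unit entries, which makes $G_{11}$ nilpotent of index $\kappa$ with $1\le\kappa\le t$, giving the Neumann series $I+G_{11}+\cdots+G_{11}^{\kappa-1}$ and the geometric sum $1+\beta+\cdots+\beta^{\kappa-1}$, as in \eqref{eq:k-bound}.
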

\begin{rem}\label{rem:beta} If $\beta\le 1$ and $s(p)=1/\beta$, which is the case for semi-stable linear polynomials,
the norm bound asserted in the theorem is sharp. In general, it is
not sharp, even in the univariate case.
\end{rem}
\begin{rem}\label{rem:PMRP}
Theorem \ref{thm:norm-bds} implies that the Principal Minor
Relation Problem \eqref{eq:PMRP} with data $\{p_k\neq 0\colon k\in
S\}$ is solvable for $n=\sum_{k\in S}k$.
\end{rem}
\begin{proof}[Proof of Theorem \ref{thm:norm-bds}]
Form the matrices
$$C_0=-\beta^{\frac{1-t}{2}}\ \underset{k\in
S}{\row}\bigg[|p_k|^{\frac12}\bigg],\ C_1=\ldots=C_{t-1}=\beta
I_{|S|},\ C_t=\beta^{\frac{1-t}{2}}\ \underset{k\in
S}\col\bigg[\frac{p_k}{|p_k|^{\frac12}}\bigg],$$ all of equal norm
$\beta$. Relative to the standard ordering of factors,
$$z^k=\underset{k_1\ {\rm times}}{\underbrace{z_1\cdots z_1}}\cdot \underset{k_2\ {\rm times}}
{\underbrace{z_2\cdots z_2}}\cdot\ldots\cdot \underset{k_d\ {\rm
times}}{\underbrace{z_d\cdots z_d}} ,$$ write each monomial $z^k$, $k\in S$, as an expanded product
$z^k=z_{i_1(k)}\cdots z_{i_t(k)}$, where $z_{i_j(k)}\ne1$, for $1\le j\le|k|$, and $z_{i_j(k)}=1$,
for $|k|+1\le j\le t$. Let
$$L_j(z)=\underset{k\in S}{\diag}[z_{i_j(k)}],\quad j=1,\ldots,t,$$
and observe that $L_1(z)$ contains no unit entries by construction.
It is then easy to check that \eqref{eq:prod} holds for $q=1-p$.
Thus, by Lemma \ref{lem:det}, we obtain
\begin{align*}
p(z)&=\det\left(I_{1+|S|t}-
\begin{bmatrix}
0 & C_0L_1(z) & 0 & \ldots  & 0\\
\vdots & \ddots & \ddots & \ddots & \vdots\\
\vdots &   & \ddots     & \ddots & 0 \\
0 & \ldots & \ldots & 0 & C_{t-1}L_t(z)\\
C_t & 0 & \ldots & \ldots & 0
\end{bmatrix}\right)\\
&=\det(I_{1+|S|t}-C\cdot L(z)),
\end{align*} where $C$ and $L(z)$ are as in \eqref{eq:CL}.

Using an appropriate permutation $T$, we can bubble-sort $L(z)$ so
that all diagonal ones are stacked in the left upper corner block:
$$TL(z)T^{-1}=\begin{bmatrix}
I_\ell & 0\\
0 & Z_{n}
\end{bmatrix},
$$ where $\ell=1+|S|t-|n|$.
Then, partitioned accordingly,
$$TCT^{-1}=\begin{bmatrix}
G_{11} & G_{12}\\
G_{21} & G_{22}
\end{bmatrix}=:G$$
has the following structure:
$$G=\begin{bmatrix}
\begin{matrix}
0 & 0 & 0 & \ldots  &  0\\
\vdots & \ddots &  * & \ddots &   \vdots\\
\vdots &   &   \ddots   & \ddots &  0 \\
0 & \ldots & \ldots & 0 &   *\\
* & 0  & \ldots & \ldots  & 0
\end{matrix} & \ & \begin{matrix}
* & 0 & 0 & \ldots  &  0\\
0 & 0 &  * & \ddots &   \vdots\\
\vdots & \ddots  &   \ddots   & \ddots &  0 \\
\vdots &  & \ddots & \ddots &   *\\
0 & \ldots  & \ldots & 0  & 0
\end{matrix}\\
& & \\
\begin{matrix}
0 & * & 0 & \ldots  &  0\\
\vdots & \ddots &  \ddots & \ddots &   \vdots\\
\vdots &   &   \ddots   & \ddots &  0 \\
0 & \ldots & \ldots & 0 &   *\\
* & 0  & \ldots & \ldots  & 0
\end{matrix} & \ & \begin{matrix}
0 & * & 0 & \ldots  &  0\\
\vdots & \ddots &  \ddots & \ddots &   \vdots\\
\vdots &   &   \ddots   & \ddots &  0 \\
\vdots &  &  & \ddots &   *\\
0 & \ldots  & \ldots & \ldots  & 0
\end{matrix}
\end{bmatrix}.$$
We observe that $G_{11}$ is nilpotent of index $\kappa$, where
$\kappa$ is the number of nonzero blocks $L_{i}(0)$ (counting
$L_0\equiv 1$). Since, $L_1(0)=0$, we necessarily have
$1\le\kappa\le t$. Thus, we obtain that
$$p(z)=\det(I_{1+|S|t}-C\cdot L(z))=\det\Big(I_{1+|S|t}-G\cdot \begin{bmatrix}
I_\ell & 0\\
0 & Z_{n} \end{bmatrix}\Big)=\det(I_{|n|}-KZ_n),$$ where
$K=G_{22}+G_{21}(I-G_{11})^{-1}G_{12}$ by the same argument as in
the proof of Theorem \ref{thm:q-matrix}.

The norm bound on $K$ is obtained as follows. For a fixed $u\in\mathbb{C}^{|n|}$, the solution to the vector equation
 $$\begin{bmatrix}
 G_{11} & G_{12}\\
 G_{21} & G_{22}
 \end{bmatrix}\begin{bmatrix}
 x\\
 u
 \end{bmatrix}=\begin{bmatrix}
 x\\
 y
 \end{bmatrix}$$
in $x\in\mathbb C^{\ell}$ and $y\in\mathbb C^{|n|}$ is given by
$$x=(I_{\ell}-G_{11})^{-1}G_{12}u=(I_\ell+G_{11}+\cdots+G_{11}^{\kappa-1})G_{12}u,\quad
y=Ku.$$ Observing that $\|G\|=\beta$, we have
$$\|x\|^2+\|y\|^2\le\beta^2(\|x\|^2+\|u\|^2).$$
If $\beta\le1$, then $\|y\|\le\beta\|u\|$ and thus $\|K\|\le\beta$. If $\beta>1$, then the estimate
$$\|x\|\le(1+\beta+\cdots+\beta^{\kappa-1})\beta\|u\|$$
implies that
\begin{multline*}
\|y\|^2\le(\beta^2-1)\|x\|^2+\beta^2\|u\|^2
\le\beta^2\left((\beta^2-1)(1+\beta+\cdots+\beta^{\kappa-1})^2+1\right)\|u\|^2,
\end{multline*}
which yields \eqref{eq:k-bound}.
\end{proof}
\begin{rem}\label{rem:question}
Given a $d$-variable polynomial $p$, $p(0)=1$, and a $d$-tuple
$n\ge\deg p$ such that \eqref{eq:repr} holds, one may consider the
set $\mc K_n(p)$ of $|n|\times|n|$ matrices $K$ such that
$\det(I_{|n|}-KZ_n)=p(z)$. It is then of interest to determine the
constant
  $$\alpha(p):=\inf_{n}\min_{K\in\mathcal K_n(p)}\|K\|.$$
  In particular, it is unclear whether $\alpha(p)<1$
  ($\alpha(p)\le 1$) for $p$ stable (semi-stable).
\end{rem}

\section{The Schur--Agler class and wedge powers}\label{wedge}

We will now examine the Schur--Agler norm of tensor and exterior
products of operator-valued functions. The results are preceded by
some definitions. For a background on tensor and exterior algebras
see, e.g., \cite{Bhatia, Flanders, Greub}.

Let $\mc V^{\otimes k}$ be the $k$-fold tensor power of a vector
space $\mc V$. The $k$-th antisymmetric tensor power $\mc
V^{\wedge k}$ of $\mc V$ may be viewed as a subspace of $\mc
V^{\otimes k}$, generated by elementary antisymmetric tensors
$$v_1\wedge\ldots\wedge v_k=\sum_{\s}({\rm sign}\
\s)v_{\s(1)}\otimes\ldots\otimes v_{\s(k)},$$ where the summation
is taken over all permutations $\s$ of $1, 2, \ldots, k$.

Given a linear map $A:\mc U\to\mc V$ of vector spaces, the linear
operator $A^{\wedge k}:\ \mc U^{\wedge k}\to\mc V^{\wedge k}$,
determined by the equalities
$$A^{\wedge k}(u_1\wedge\ldots\wedge u_k)=Au_1\wedge\ldots \wedge Au_k,$$
is the compression $\pi_{\mc V^{\wedge k}} A^{\otimes k}\big|_{\mc
U^{\wedge k}}$ of the tensor power $A^{\otimes k}\colon \mathcal
U^{\otimes k}\to\mathcal V^{\otimes k}$. Here $\pi_M$ denotes the
orthogonal projection onto a subspace $M$.

If $e_1, \ldots, e_n$ form a basis for $\mc V$, then
$e_{i_1}\wedge\ldots\wedge e_{i_k},\ 1\le i_1<\ldots<i_k\le n,$
form a basis for $\mc V^{\wedge k}$ of cardinality $\binom nk$.
Relative to a choice of bases for  $\mc U$ and $\mc V$, the matrix
entry for  $A^{\wedge k}$ in row-column position $((i_1, \ldots,
i_k), (j_1, \ldots, j_k))$ is the minor of the matrix of $A$ built
from rows $i_1, \ldots, i_k$ and columns $j_1, \ldots, j_k$.

If $\mathcal U$ and $\mathcal V$ are normed vector spaces and if
$S(z)=\sum_{r\in\mathbb N_0^d} S_r z^r$ is a power series with
coefficients in $\mathcal{L(U,V)}$, then, for any tuple $T=(T_1,
\ldots, T_d)$ of commuting operators on some normed vector space
$\mathcal H$, we may consider the operator
$$S(T)=\sum_{r\in\mathbb N_0^d}S_r\otimes T^r$$ acting from $\mc U\otimes\mc H$ to
$\mc V\otimes\mc H$, provided the series converges. More
generally, starting with power series $S_j(z)=\sum_{r\in\mathbb
N_0^d} (S_j)_r z^r$, $j=1,\ldots,k,$ with coefficients in
$\mathcal{L(U,V)}$,
 and $T=(T_1, \ldots, T_d)$ as above, form the
operator
$$\left(S_1\otimes\cdots\otimes S_k\right)(T)=\sum_{r_1,\ldots, r_k\in\mathbb
N^d_0}\left(\bigotimes_{j=1}^k(S_j)_{r_j}\right)\otimes
T^{r_1+\ldots+r_k},$$ and its compression
$$\left(S_1\wedge\cdots\wedge S_k\right)(T)=(\pi_{\mc V^{\wedge k}}\otimes I_{\mc
H})(S_1\otimes\cdots\otimes S_k)(T)\big|_{\mc U^{\wedge
k}\otimes\mc H}.$$ The objective of this section is to establish
the following theorem.
\begin{thm}
Let $\mc U$ and $\mc V$ be Hilbert spaces and let $S_1, \ldots,
S_k$ belong to $\mc S\mc A_d(\mc U, \mc V)$. Then
$S_1\otimes\cdots\otimes S_k$ belongs to $\mc S\mc A_d(\mc
U^{\otimes k}, \mc V^{\otimes k})$ and $S_1\wedge\cdots\wedge S_k$
belongs to $\mc S\mc A_d(\mc U^{\wedge k}, \mc V^{\wedge k})$.
\end{thm}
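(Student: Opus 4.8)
The plan is to prove both statements simultaneously by exhibiting, for each tuple $T=(T_1,\ldots,T_d)$ of commuting strict contractions on a Hilbert space $\mathcal H$, a contractive evaluation. By definition of $\mc{SA}_d$, each $S_j$ being Schur--Agler means $\|S_j(T)\|\le 1$ for every such $T$. The operator $(S_1\otimes\cdots\otimes S_k)(T)$, as defined in the excerpt, is precisely the compression/rearrangement of the operator acting on $\mc U^{\otimes k}\otimes\mc H^{\otimes k}$ obtained by applying $S_j$ in the $j$-th slot with a \emph{separate} copy of the tuple $T$, and then restricting to the ``diagonal'' subspace where all $k$ copies of $\mathcal H$ are identified via the commuting structure. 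So the first step is to observe that on the big space $\mc U^{\otimes k}\otimes\mc H^{\otimes k}$, the operator $S_1(T^{(1)})\otimes\cdots\otimes S_k(T^{(k)})$, where $T^{(j)}$ acts on the $j$-th copy of $\mc H$, is a tensor product of contractions, hence a contraction. Then $(S_1\otimes\cdots\otimes S_k)(T)$ is recovered by compressing this to an appropriate subspace, and compressions of contractions are contractions.

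More carefully, I would argue as follows. Fix commuting strict contractions $T_1,\ldots,T_d$ on $\mathcal H$. On $\mathcal H^{\otimes k}$ define, for each $i$, the operator $\widetilde T_i = T_i\otimes\cdots\otimes T_i$ ($k$ factors); these commute in $i$ and are strict contractions. One checks from the formula
\[
(S_1\otimes\cdots\otimes S_k)(T)=\sum_{r_1,\ldots,r_k}\Big(\bigotimes_{j=1}^k (S_j)_{r_j}\Big)\otimes T^{r_1+\cdots+r_k}
\]
that this operator equals the compression of $\big(S_1(T)\otimes I\otimes\cdots\big)\cdots$ — more precisely, of $\bigotimes_{j=1}^k S_j\big(T^{(j)}\big)$ acting on $\mc U^{\otimes k}\otimes\mc H^{\otimes k}$, where $T^{(j)}_i = I\otimes\cdots\otimes T_i\otimes\cdots\otimes I$ with $T_i$ in the $j$-th slot — down to the subspace $\mc U^{\otimes k}\otimes\Delta$, where $\Delta\subseteq\mc H^{\otimes k}$ is the closure of the span of the range of the isometric embedding $h\mapsto$ (the cyclic-type subspace generated by iterated application of the $\widetilde T_i$'s to $h\otimes\cdots$). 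The cleanest route is actually to identify $(S_1\otimes\cdots\otimes S_k)(T)$ directly as $V^*\big(\bigotimes_j S_j(T^{(j)})\big)V$ for a suitable isometry $V$ intertwining the single tuple $T$ on $\mc U^{\otimes k}\otimes\mc H$ with the $k$-fold slotwise tuple on $\mc U^{\otimes k}\otimes\mc H^{\otimes k}$. Since $\bigotimes_j S_j(T^{(j)})$ is a tensor product of contractions (each $S_j(T^{(j)})$ is unitarily equivalent to $S_j(T)\otimes I$ on the other slots, hence contractive), the compression is contractive, giving $S_1\otimes\cdots\otimes S_k\in\mc{SA}_d(\mc U^{\otimes k},\mc V^{\otimes k})$.

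For the wedge statement, note that $\mc U^{\wedge k}\otimes\mc H$ is a subspace of $\mc U^{\otimes k}\otimes\mc H$, and by the definition given in the excerpt,
\[
(S_1\wedge\cdots\wedge S_k)(T)=(\pi_{\mc V^{\wedge k}}\otimes I_{\mc H})\,(S_1\otimes\cdots\otimes S_k)(T)\big|_{\mc U^{\wedge k}\otimes\mc H}
\]
is just a further compression of $(S_1\otimes\cdots\otimes S_k)(T)$. A composition of compressions of a contraction is again a contraction, so $S_1\wedge\cdots\wedge S_k\in\mc{SA}_d(\mc U^{\wedge k},\mc V^{\wedge k})$ follows immediately once the tensor case is done. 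One should also check analyticity on $\mathbb D^d$ (so that these are genuinely elements of the stated classes rather than just satisfying the norm bound formally): this is routine since each $S_j$ is analytic and the tensor/wedge operations are polynomial on coefficients and preserve convergence of the relevant power series on $\mathbb D^d$ — indeed evaluating at the commuting tuple $T=(z_1 I,\ldots,z_d I)$ recovers the pointwise values.

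The main obstacle I anticipate is purely bookkeeping: writing down the intertwining isometry $V$ cleanly and verifying the identity $(S_1\otimes\cdots\otimes S_k)(T)=V^*\big(\bigotimes_j S_j(T^{(j)})\big)V$ at the level of the defining power series, i.e. matching $\bigotimes_j (S_j)_{r_j}\otimes T^{r_1+\cdots+r_k}$ against the slotwise action $\bigotimes_j\big((S_j)_{r_j}\otimes T^{r_j}\big)$ after compression to the diagonal copy of $\mc H$. The conceptual content — tensor products and compressions preserve contractivity — is immediate; the care is all in making the ``$k$ independent copies of $T$, then restrict to the diagonal'' construction precise and in confirming the compression in the wedge case composes correctly with the one defining $S_1\otimes\cdots\otimes S_k$.
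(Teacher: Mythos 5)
Your treatment of the wedge case is fine (once the tensor case is settled, compressing to the antisymmetric subspaces can only decrease the Agler norm, exactly as the paper does). But the tensor case as you propose it has a genuine gap: the isometry $V$ you want does not exist in general. Writing $V=I_{\mc U^{\otimes k}}\otimes W$ with $W:\mc H\to\mc H^{\otimes k}$ an isometry, the identity $(S_1\otimes\cdots\otimes S_k)(T)=V^*\bigl(\bigotimes_j S_j(T^{(j)})\bigr)V$ would require, coefficient by coefficient,
\[
W^*\bigl(T^{r_1}\otimes\cdots\otimes T^{r_k}\bigr)W=T^{r_1+\cdots+r_k}\qquad\text{for all }r_1,\ldots,r_k\in\mathbb N_0^d,
\]
and there is no such $W$ even for $k=2$, $d=1$. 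If $T$ is the bilateral shift on $\ell^2(\mathbb Z)$, any isometry $W$ satisfying the case $r_1=1,r_2=0$ and $r_1=0,r_2=1$ simultaneously would, by the equality case of Cauchy--Schwarz forced by unitarity of $T$, have to satisfy $(T\otimes I-I\otimes T)W=0$; but $T\otimes I-I\otimes T$ is injective on $\ell^2(\mathbb Z^2)$, so $W=0$. A finite-dimensional obstruction also arises: take $T$ a nonzero nilpotent with $T^2=0$, so that $T^2=0$ must be recovered as a compression of $T\otimes T\neq0$ while simultaneously $T=W^*(T\otimes I)W=W^*(I\otimes T)W$ --- chasing the resulting linear constraints shows they are incompatible. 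The ``restrict to the diagonal copy of $\mc H$'' picture is attractive but not linear, and no linear substitute with the needed intertwining properties exists.

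The paper avoids this entirely. It never introduces $\mc H^{\otimes k}$; instead it stays on $\mc H$ and factors the power series as a \emph{product of $k$ operators}:
\[
\bigl(S_1\otimes\cdots\otimes S_k\bigr)(T)=\prod_{j=1}^{k}\bigl(I\otimes\cdots\otimes S_j\otimes\cdots\otimes I\bigr)(T),
\]
where the $j$-th factor is the evaluation at the \emph{same} tuple $T$ of the function $z\mapsto I\otimes\cdots\otimes S_j(z)\otimes\cdots\otimes I$, with $S_j$ in the $j$-th tensor slot and identities (on $\mc V$ to its left, on $\mc U$ to its right) elsewhere. Tensoring a Schur--Agler function with identity operators does not increase the Agler norm, so each factor is a contraction, and the product of contractions is a contraction. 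This requires no ancillary space and no intertwining isometry; the only thing used is commutativity of the $T_i$'s (so that $T^{r_1}\cdots T^{r_k}=T^{r_1+\cdots+r_k}$), which makes the algebraic factorization of the double sum into a $k$-fold product valid. If you want to repair your write-up, replace the compression-from-$\mc H^{\otimes k}$ step by this product factorization on $\mc H$ itself.
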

\begin{proof}
Let $T=(T_1, \ldots, T_d)$ be a tuple of commuting strict
contractions on some Hilbert space $\mc H$. Then the mapping
\begin{align*}
\left(\bigotimes_{j=1}^kS_j\right)(T)&=\sum_{r_1,\ldots,r_k\in\mathbb
N^d_0}\left(\bigotimes_{j=1}^k(S_j)_{r_j}\right)\otimes T^{r_1+\ldots+r_k}\\
&=\prod_{j=1}^k\sum_{r_j\in\mathbb N_0^d}I_{\mc
V}\otimes\cdots\otimes I_{\mc
V}\otimes(S_j)_{r_j}\otimes I_{\mc V}\otimes\cdots\otimes I_{\mc V}\otimes T^{r_j}\\
&=\prod_{j=1}^k\bigg(I_{\mc V}\otimes\cdots\otimes I_{\mc
V}\otimes S_j\otimes I_{\mc V}\otimes\cdots\otimes I_{\mc
V}\bigg)(T)
\end{align*}
is contractive as a product of contractive factors. Hence
$\bigotimes_{j=1}^kS_j$ belongs to $\mc S\mc A_d(\mc U^{\otimes
k}, \mc V^{\otimes k})$. Consequently,
$$\|S_1\wedge\ldots\wedge S_k\|_{\mc A}\le\|S_1\otimes\ldots\otimes S_k\|_{\mc
A}\le1,$$
which gives the second assertion.
\end{proof}
\begin{cor}\label{detcor}
Let $S$ be a $n\times n$ matrix-valued Schur--Agler function,
i.e., $S\in\mathcal{SA}_d(\mathbb{C}^n)$. Then, for every $k=1,
\ldots, n$, the $k$-th compound matrix-valued function of $S$ is
also Schur--Agler. In particular, $\det S(z)$ is a Schur--Agler
function.
\end{cor}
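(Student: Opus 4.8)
The plan is to identify the $k$-th compound matrix function of $S$ with the matrix of the wedge power $S^{\wedge k}$ and then apply the preceding theorem with $S_1=\cdots=S_k=S$.

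First I would fix the standard orthonormal basis $e_1,\ldots,e_n$ of $\mathbb{C}^n$ and recall that the vectors $k!^{-1/2}\,e_{i_1}\wedge\cdots\wedge e_{i_k}$, $1\le i_1<\cdots<i_k\le n$, form an orthonormal basis of $(\mathbb{C}^n)^{\wedge k}$, so that this space is unitarily identified with $\mathbb{C}^{\binom{n}{k}}$. By the computation of the matrix entries of a wedge power recalled in Section \ref{wedge}, the matrix of $S(z)^{\wedge k}$ with respect to this basis has in row-column position $((i_1,\ldots,i_k),(j_1,\ldots,j_k))$ the $k\times k$ minor of $S(z)$ built from rows $i_1,\ldots,i_k$ and columns $j_1,\ldots,j_k$; in other words, it is exactly the $k$-th compound matrix $C_k(S(z))$. (The scalar $k!^{-1/2}$ enters identically on the domain and codomain sides and hence cancels, which is why the unnormalized elementary tensors used in Section \ref{wedge} produce the same matrix.) Since the Schur--Agler property of an operator-valued function is unaffected by unitary identifications of its coefficient Hilbert spaces, it suffices to prove that $S^{\wedge k}$ belongs to $\mathcal{SA}_d((\mathbb{C}^n)^{\wedge k})$.

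This last fact is immediate from the preceding theorem applied to $S_1=\cdots=S_k=S\in\mathcal{SA}_d(\mathbb{C}^n)$: it yields $S^{\wedge k}=S_1\wedge\cdots\wedge S_k\in\mathcal{SA}_d((\mathbb{C}^n)^{\wedge k})$, which is the first assertion. For the statement about the determinant I would take $k=n$. Then $(\mathbb{C}^n)^{\wedge n}$ is one-dimensional, spanned by $\omega:=e_1\wedge\cdots\wedge e_n$, and by the defining property of the wedge power
$$S(z)^{\wedge n}\omega = S(z)e_1\wedge\cdots\wedge S(z)e_n = \det S(z)\,\omega,$$
so that the $1\times 1$ matrix representing $S^{\wedge n}$ is precisely the scalar function $z\mapsto\det S(z)$. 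Hence $\det S\in\mathcal{SA}_d$.

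There is no genuine obstacle here; the substance of the corollary is entirely the preceding theorem. The one step that requires a moment's care is the bookkeeping in the first paragraph — matching the abstract operator $S^{\wedge k}$ with the concrete compound matrix $C_k(S)$, and checking that passing to an orthonormal basis of the exterior power (in place of the unnormalized spanning set of Section \ref{wedge}) changes nothing, which it does not because the normalization constant appears symmetrically on both sides. It is also worth remarking that the case $k=1$ is trivial, $S^{\wedge 1}=S$, so the statement is substantive only for $2\le k\le n$.
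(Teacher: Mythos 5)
Your proof is correct and follows the same route as the paper's two-line argument: identify the $k$-th compound matrix of $S$ with the matrix of the wedge power $S^{\wedge k}$ and apply the preceding theorem with $S_1=\cdots=S_k=S$. You are slightly more explicit than the paper about the normalization factor $k!^{-1/2}$ in passing to an orthonormal basis of $(\mathbb{C}^n)^{\wedge k}$, which cancels symmetrically on domain and codomain, but this is a bookkeeping refinement and the substance is identical.
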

\begin{proof}
The matrix of $S^{\wedge k}$ is the $k$-th compound matrix of $S$.
The case $k=n$ corresponds to $\det S(z)$.
\end{proof}

Similarly, in the setting of $k$-th symmetric tensor powers, one
may consider the operators $(S_1\vee\cdots\vee S_k)(T)$. The proof
of the following theorem is omitted as it parallels the preceding
development.
\begin{thm}
Let $\mc U$ and $\mc V$ be Hilbert spaces and let $S_1, \ldots,
S_k$ belong to $\mc S\mc A_d(\mc U, \mc V)$. Then
$S_1\vee\cdots\vee S_k$ belongs to $\mc S\mc A_d(\mc U^{\vee k},
\mc V^{\vee k})$.
\end{thm}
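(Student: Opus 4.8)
The plan is to realize $S_1\vee\cdots\vee S_k$ as a compression of the full tensor product $S_1\otimes\cdots\otimes S_k$ and then combine the tensor-power statement established earlier in this section with the elementary fact that a compression of a contraction is again a contraction.

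First I would record that, by the tensor-power assertion proved earlier, $S_1\otimes\cdots\otimes S_k$ belongs to $\mc S\mc A_d(\mc U^{\otimes k},\mc V^{\otimes k})$; equivalently, for every tuple $T=(T_1,\ldots,T_d)$ of commuting strict contractions on a Hilbert space $\mc H$ one has $\|(S_1\otimes\cdots\otimes S_k)(T)\|\le 1$. The symmetric tensor power $\mc V^{\vee k}$ is, exactly as $\mc V^{\wedge k}$, a subspace of $\mc V^{\otimes k}$ carrying an orthogonal projection $\pi_{\mc V^{\vee k}}$, and by the definition recalled above,
$$(S_1\vee\cdots\vee S_k)(T)=(\pi_{\mc V^{\vee k}}\otimes I_{\mc H})\,(S_1\otimes\cdots\otimes S_k)(T)\,\big|_{\mc U^{\vee k}\otimes\mc H}.$$

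Next, for a fixed such $T$, the operator on the right-hand side is obtained from $(S_1\otimes\cdots\otimes S_k)(T)$ by restricting the domain to the subspace $\mc U^{\vee k}\otimes\mc H$ of $\mc U^{\otimes k}\otimes\mc H$ and post-composing with the orthogonal projection onto $\mc V^{\vee k}\otimes\mc H$; both operations are norm-nonincreasing, so
$$\|(S_1\vee\cdots\vee S_k)(T)\|\le\|(S_1\otimes\cdots\otimes S_k)(T)\|\le 1.$$
Taking the supremum over all tuples $T$ of commuting strict contractions yields $\|S_1\vee\cdots\vee S_k\|_{\mc A}\le 1$. Since moreover $(S_1\vee\cdots\vee S_k)(z)=\pi_{\mc V^{\vee k}}\bigl(S_1(z)\otimes\cdots\otimes S_k(z)\bigr)\big|_{\mc U^{\vee k}}$ is obtained from the analytic function $z\mapsto S_1(z)\otimes\cdots\otimes S_k(z)$ by composition with the fixed bounded maps $\pi_{\mc V^{\vee k}}$ and the inclusion $\mc U^{\vee k}\hookrightarrow\mc U^{\otimes k}$, it is analytic on $\mathbb D^d$; hence $S_1\vee\cdots\vee S_k\in\mc S\mc A_d(\mc U^{\vee k},\mc V^{\vee k})$.

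There is no genuine obstacle here — which is why the paper leaves the proof to the reader — since the argument is word for word the one used for the antisymmetric case, with the symmetric subspace replacing the antisymmetric one. The single point worth noting is that one needs \emph{no} invariance of $\mc V^{\vee k}$ under $(S_1\otimes\cdots\otimes S_k)(T)$: the symmetric power is defined purely as a compression, and compressions of contractions are contractions automatically, so the estimate goes through unchanged.
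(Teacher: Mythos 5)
Your argument is exactly the one the paper has in mind: the paper explicitly omits this proof because ``it parallels the preceding development,'' i.e.\ the antisymmetric case, where $S_1\wedge\cdots\wedge S_k$ is realized as a compression of $S_1\otimes\cdots\otimes S_k$ and the bound $\|S_1\wedge\cdots\wedge S_k\|_{\mc A}\le\|S_1\otimes\cdots\otimes S_k\|_{\mc A}\le 1$ is drawn. You correctly replay that argument with $\mc V^{\vee k}$ in place of $\mc V^{\wedge k}$, and your closing observation that no invariance of the symmetric subspace is needed is a nice clarification but not a departure from the paper's approach.
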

\begin{cor}
Let $S$ be a $n\times n$ matrix-valued Schur--Agler function,
i.e., $S\in\mathcal{SA}_d(\mathbb{C}^n)$. Then, for every $k=1,
\ldots, n$, the $k$-th permanental compound matrix-valued function
of $S$ is also Schur--Agler. In particular, the permanent of a
Schur--Agler function is also Schur--Agler.
\end{cor}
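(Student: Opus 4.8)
The plan is to follow the proof of Corollary~\ref{detcor} almost verbatim, with the $k$-th exterior power replaced by the $k$-th symmetric power and the wedge theorem replaced by the symmetric-tensor theorem established just above. The first step is to record that, for $1\le k\le n$, the symmetric power $S^{\vee k}$ of $S$ belongs to $\mc{SA}_d\big((\mathbb C^n)^{\vee k}\big)$; this is the preceding theorem applied with $S_1=\cdots=S_k=S$.

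The second step is the general observation, already used in the proof of the wedge theorem, that compressing a Schur--Agler function keeps it Schur--Agler: if $f\in\mc{SA}_d(\mc U,\mc V)$ and $N\subseteq\mc U$, $M\subseteq\mc V$ are (closed) subspaces, then $z\mapsto \pi_M\,f(z)\big|_N$ lies in $\mc{SA}_d(N,M)$, since for any tuple $T$ of commuting strict contractions on a Hilbert space $\mc H$ the value of this function at $T$ is $(\pi_M\otimes I_{\mc H})\,f(T)\big|_{N\otimes\mc H}$, of norm at most $\|f(T)\|\le1$.

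The third step is to recognize the $k$-th permanental compound matrix-valued function of $S$ as such a compression of $S^{\vee k}$. Relative to the orthonormal basis of $(\mathbb C^n)^{\vee k}$ obtained by unit-normalizing the symmetrized products of standard basis vectors, the matrix entry of $S^{\vee k}(z)$ between two \emph{squarefree} basis vectors $\hat e_\alpha,\hat e_\beta$ (those indexed by $k$-element subsets $\alpha,\beta\subseteq\{1,\ldots,n\}$) is exactly $\per S(z)[\alpha\mid\beta]$ --- the normalization constants for squarefree multi-indices are all $1$, so no scaling factor intervenes. Hence the $k$-th permanental compound of $S(z)$, indexed by $k$-subsets, is the compression of $S^{\vee k}(z)$ to the span of the squarefree $\hat e_\alpha$, and is therefore Schur--Agler by the first two steps. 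For $k=n$ the only squarefree index is $(1,\ldots,n)$, this span is one-dimensional, and the single entry is $\per S(z)$, so $\per S\in\mc{SA}_d$.

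The one point needing care is the claim in the third step that the permanental compound really is an honest compression of $S^{\vee k}$, i.e.\ that no residual diagonal scaling survives --- this is precisely why we restrict attention to squarefree multi-indices and take the subset-indexed version of the permanental compound rather than the multiset-indexed ``induced matrix''. Checking that $\langle S^{\vee k}(z)\,\hat e_\beta,\hat e_\alpha\rangle=\per S(z)[\alpha\mid\beta]$ on the squarefree block is a brief unwinding of the definitions of $\hat e_\alpha$ and of $S^{\vee k}$ (or can be cited from standard references on symmetric powers and induced matrices); everything else is routine.
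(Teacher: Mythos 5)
Your proof is correct and follows the route the paper intends (apply the preceding symmetric-power theorem with $S_1=\cdots=S_k=S$, then read off the permanental compound), but you make explicit a genuine subtlety that the paper glosses over by omitting the proof. In the antisymmetric case $(\mathbb C^n)^{\wedge k}$ has dimension $\binom nk$ and the matrix of $S^{\wedge k}$ in the (commonly scaled) orthonormal basis literally \emph{is} the $k$-th compound; one cannot argue ``parallel'' for the symmetric case, since $(\mathbb C^n)^{\vee k}$ has dimension $\binom{n+k-1}{k}$ and the matrix of $S^{\vee k}$ is the $k$-th induced matrix, not the $k$-th permanental compound. Your observation that the permanental compound is the compression of $S^{\vee k}$ to the span of the squarefree symmetrized basis vectors --- and that on that block the normalization constants are uniform (all equal to $\sqrt{k!}$), so no residual diagonal scaling intervenes --- is exactly what is needed to close the gap, together with the (correct) remark that compressions of Schur--Agler functions are Schur--Agler. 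So this is essentially the paper's approach, written out carefully where the paper is silent.
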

We note that a permanental analog of \eqref{eq:repr} features in
\cite{Branden}.

\section{Agler denominators}\label{Ad}

We are in a position to discuss (eventual) Agler denominators and stability in relation to \eqref{eq:repr}.
It will first be shown that  there exist stable polynomials in three or more variables that are not Agler denominators.

\begin{ex} \rm
Let $p(z)$ be a $d$-variable polynomial, with $\|p\|_\infty=1$ and
multi-degree $m$,
violating the von Neumann inequality \eqref{eq:vN}.
Let there exist a tuple $T=(T_1, \ldots, T_d)$ of commuting
contractions such that $T^k=T_1^{k_1}T_2^{k_2}\cdots T_d^{k_d}=0$,
for some $k\in\mathbb N_0^d$, and $\|p(T)\|>1$. Examples of such a
scenario can be found in  \cite{Varo,Hol01,CD}; see also Section
\ref{sec6}.

For $0<r<1$, the polynomial $q(z)=1+rz^{k+m}\bar p(1/z)$ is stable
and so the rational function
$$f(z)=\frac{z^{k+m}+rp(z)}{1+rz^{k+m}\bar p(1/z)}$$ is inner. However,
since $f(T)=rp(T)$, $f$ does not belong to $\mc{SA}_d$ whenever
$r>1/\|p(T)\|$.
In particular, if the multi-degree of $z^m\bar{p}(1/z)$ is also $m$, then
$f(z)=z^{k+m}\bar q(1/z)/q(z)$, so that $q$ is not an Agler denominator.

%

To give a concrete example, we specialize to the Kaijser--Varopoulos--Holbrook setting. The polynomial
$$p(z_1, z_2, z_3)=\frac15\bigg(z_1^2+z_2^2+z_3^2-2z_1z_2-2z_2z_3-2z_3z_1\bigg)$$
satisfies $\|p\|_\infty=1$, and there exist commuting contractions $T_1, T_2, T_3$ such that
$\|p(T_1, T_2, T_3)\|=6/5$ and $T_1T_2T_3=0$. The corresponding rational inner function
$$f(z_1, z_2, z_3)=\frac{z_1^3z_2^3z_3^3+\frac
r5(z_1^2+z_2^2+z_3^2-2z_1z_2-2z_2z_3-2z_3z_1)}{1+\frac r5z_1z_2
z_3(z_1^2z_2^2+z_2^2z_3^2+z_3^2z_1^2-2z_1z_2z_3^2-2z_1z_2^2z_3-2z_1^2z_2z_3)},$$
is not Schur--Agler for $5/6<r<1$. For these values of $r$, the stable polynomial
$$q(z_1, z_2, z_3)=1+\frac r5\ z_1z_2z_3\bigg(z_1^2z_2^2+z_2^2z_3^2+z_3^2z_1^2-2z_1z_2z_3^2-2z_1z_2^2z_3-2z_1^2z_2z_3\bigg)$$
is not an Agler denominator.
\end{ex}

We now have the following result.

\begin{thm}\label{Aglerden} Let a polynomial $p$ admit a representation
\eqref{eq:repr} for some $n\in\mathbb N_0^d$ and contractive $K$.
Then
\begin{equation}\label{prevoverp}
\frac{z^n\bar{p}(1/z)}{p(z)} =\det (-K^* + \sqrt{I-K^*K} Z_n (I-KZ_n)^{-1} \sqrt{I-KK^*} ).
\end{equation}
In particular, $p$ is an eventual Agler denominator of order $n$.
If $\deg p=n$, then $p$ is an Agler denominator.
\end{thm}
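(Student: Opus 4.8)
The plan is to start from the determinantal representation $p(z)=\det(I_{|n|}-KZ_n)$ with $K$ a contraction, and to produce an explicit transfer-function realization of the rational inner function $z^n\bar p(1/z)/p(z)$ using a unitary dilation of $K$. Concretely, the first step is to observe that the formula on the right-hand side of \eqref{prevoverp} is the determinant of the transfer function
$$f(z)=-K^*+\sqrt{I-K^*K}\,Z_n(I-KZ_n)^{-1}\sqrt{I-KK^*},$$
which is exactly the characteristic function / transfer function associated with the colligation
$$U=\begin{bmatrix} -K^* & \sqrt{I-K^*K}\\ \sqrt{I-KK^*} & K\end{bmatrix}.$$
One checks directly (a short block computation using $K\sqrt{I-K^*K}=\sqrt{I-KK^*}\,K$) that $U$ is unitary. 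By the transfer-function formalism recalled in Section~\ref{sec:Intro} (equation \eqref{eq:tf}), since $U$ is unitary and its ``$D$-block'' is $K$, the matrix-valued function $f$ lies in $\mc S_d(\mathbb C^{|n|})$; in fact $f\in\mc{SA}_d(\mathbb C^{|n|})$ because a unitary transfer-function realization of this type produces a Schur--Agler function (this is the ``if'' direction of the Agler realization theorem). So $f$ is a matrix-valued Schur--Agler function.

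The second step is to identify $\det f(z)$ with $z^n\bar p(1/z)/p(z)$. Using Lemma~\ref{lem:Schur-compl} on the unitary block matrix $U-\begin{bmatrix}0&0\\0&I-Z_n\end{bmatrix}$-type manipulation — more precisely, computing $\det\!\begin{bmatrix}I+K^* & -\sqrt{I-K^*K}\,Z_n\\ -\sqrt{I-KK^*} & I-KZ_n\end{bmatrix}$ in the two ways afforded by the Schur complement — one gets, on one hand, $\det(I-KZ_n)\det f(z)\cdot(\text{something})$ and, on the other, a factor $\det(I+K^*)$ times $\det$ of the Schur complement in the other corner, which after substituting $1/z$ for $z$ and pulling out the monomial $z^n$ collapses to $\bar p(1/z)$. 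The cleanest route is: apply Lemma~\ref{lem:Schur-compl} twice to the fixed matrix $M(z):=\begin{bmatrix}-K^* & \sqrt{I-K^*K}\,Z_n\\ \sqrt{I-KK^*} & I-KZ_n\end{bmatrix}$ to obtain both $\det M(z)=\det(I-KZ_n)\det f(z)$ (Schur complement of the $(2,2)$ block, valid where $I-KZ_n$ is invertible) and $\det M(z)=\det(-K^*)\det\big((I-KZ_n)+\sqrt{I-KK^*}(K^*)^{-1}\sqrt{I-K^*K}\,Z_n\big)$ when $K$ is invertible, with the general case following by continuity/density in $K$; the second expression simplifies, via $(K^*)^{-1}\sqrt{I-K^*K}\sqrt{I-K^*K}=(K^*)^{-1}-K$ and the identity $\sqrt{I-KK^*}(K^*)^{-1}\sqrt{I-K^*K}=(K^*)^{-1}-K$, to $\det(-K^*)\det(I-KZ_n+((K^*)^{-1}-K)Z_n)=\det(-K^*)\det(I+(K^*)^{-1}Z_n-2KZ_n)$ — I will need to be careful here and it may be more transparent to instead directly verify $\det f(z)=\det(I_{|n|}-K^*\overline{Z_n}^{-1})/\det(I_{|n|}-KZ_n)\cdot(\det\overline{Z_n})^{-1}$-type identity and then recognize $\det(I-K^*\overline{Z_n}^{\,-1})\det(\overline{Z_n})^{-1}$ as $z^n\,\overline{\det(I-K\bar Z_n^{\,-1}\cdot\text{stuff})}$; in any case, $\det(I-K^*W)=\overline{\det(I-K\bar W)}=\overline{p(\bar W\text{-diag})}$, and with $W=1/z$ on the diagonal this is $\bar p(1/z)$ up to the monomial factor $z^n$ coming from $\det(Z_n)=z^n$ (here $z^n$ means $\prod_i z_i^{n_i}$ after grouping the diagonal blocks). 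This yields \eqref{prevoverp}.

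With \eqref{prevoverp} in hand, the remaining assertions are immediate. Since $f\in\mc{SA}_d(\mathbb C^{|n|})$, Corollary~\ref{detcor} (applied in Section~\ref{wedge}) gives $\det f\in\mc{SA}_d$; that is, $z^n\bar p(1/z)/p(z)\in\mc{SA}_d$, which is precisely the statement that $p$ is an eventual Agler denominator of order $n$. Finally, if $\deg p=n$ then $z^n\bar p(1/z)/p(z)=z^{\deg p}\bar p(1/z)/p(z)$, which is the rational inner function in the definition of an Agler denominator, so $p$ is an Agler denominator outright. The main obstacle I anticipate is purely computational rather than conceptual: verifying the determinantal identity \eqref{prevoverp} cleanly, i.e. managing the two Schur-complement expansions of $M(z)$ and the square-root identities $K\sqrt{I-K^*K}=\sqrt{I-KK^*}K$ so that the $\bar p(1/z)$ and the monomial $z^n$ emerge correctly, while handling the non-invertibility of $K$ (and of $I-KZ_n$) by a density/continuity argument; the Schur--Agler membership of $f$ is standard once the unitarity of $U$ is checked.
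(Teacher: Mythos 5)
Your overall strategy matches the paper's exactly in its key ideas: identify the right-hand side of \eqref{prevoverp} as the transfer function of the unitary Julia colligation built from $K$, conclude that this matrix-valued function lies in $\mc{SA}_d(\mathbb C^{|n|})$, and then invoke Corollary~\ref{detcor} to pass to the scalar determinant. The place where you diverge, and where your write-up is incomplete, is the verification of the determinantal identity \eqref{prevoverp} itself.

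Your proposed route is to compute $\det M(z)$ (with $M(z)=\begin{bmatrix}-K^*&-\sqrt{I-K^*K}Z_n\\\sqrt{I-KK^*}&I-KZ_n\end{bmatrix}$, note the sign of the $(1,2)$ block which your $M(z)$ has wrong; with your $+$ sign the Schur complement of the $(2,2)$ block does not produce $f(z)$) in two ways via Lemma~\ref{lem:Schur-compl}: Schur complement on the $(2,2)$ block gives $\det(I-KZ_n)\det f(z)$, and Schur complement on the $(1,1)$ block, using $\sqrt{I-KK^*}(K^*)^{-1}\sqrt{I-K^*K}=(K^*)^{-1}-K$, collapses to $\det(Z_n-K^*)$. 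That second Schur complement does work once the signs are corrected; your intermediate expression $\det(-K^*)\det(I+(K^*)^{-1}Z_n-2KZ_n)$ is a sign-induced miscomputation and should instead read $\det(-K^*)\det(I-(K^*)^{-1}Z_n)=\det(Z_n-K^*)$. But this route requires $K$ invertible, and you correctly flag that a density/continuity step would then be needed. The paper avoids that entirely: it inserts a singular value decomposition $K=U\Sigma V^*$ into the numerator, pulls out a block-diagonal unitary factor, and then applies Lemma~\ref{commutingAC} (the $\det\begin{bmatrix}A&B\\C&D\end{bmatrix}=\det(AD-CB)$ identity for commuting $A,C$) directly, since $-U\Sigma U^*$ and $U\sqrt{I-\Sigma^2}U^*$ always commute. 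This yields $\det(Z_n-K^*)$ in one step, with no invertibility hypothesis and no limiting argument. The final rewriting $z^n\bar p(1/z)=\det(Z_n-K^*)$ is also cleaner in the paper, using only $Z_n^\top=Z_n$, whereas your sketch of that step (``$\det(I_{|n|}-K^*\overline{Z_n}^{-1})/\det(I_{|n|}-KZ_n)\cdot(\det\overline{Z_n})^{-1}$-type identity'') is vague and has stray conjugations. In summary: the conceptual skeleton is right and essentially identical to the paper's, but the determinant identity, which is the technical core, is left hanging with sign errors and an uncompleted density argument; learning the SVD-plus-Lemma~\ref{commutingAC} trick would close that gap cleanly.
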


A lemma is needed; see, e.g., \cite[Theorem 3.1.2]{Prasolov}.
\begin{lem}\label{commutingAC} Let $A,B,C$, and $D$ be square matrices of the same size,
and suppose that $AC=CA$. Then $$ \det \begin{bmatrix} A & B\\
 C &
D
\end{bmatrix} = \det (AD-CB). $$
\end{lem}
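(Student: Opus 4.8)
The plan is to reduce to the case where $A$ is invertible, where the identity is an immediate consequence of the Schur complement formula (Lemma \ref{lem:Schur-compl}), and then to recover the general case by a standard perturbation-and-polynomial-identity argument.

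First I would assume $\det A\neq 0$. By Lemma \ref{lem:Schur-compl},
$$\det\begin{bmatrix} A & B\\ C & D\end{bmatrix}=\det A\,\det(D-CA^{-1}B).$$
Multiplying the hypothesis $AC=CA$ on both sides by $A^{-1}$ gives $A^{-1}C=CA^{-1}$, hence $CA^{-1}B=A^{-1}CB$, and therefore
$$\det A\,\det(D-CA^{-1}B)=\det A\,\det(D-A^{-1}CB)=\det\bigl(A(D-A^{-1}CB)\bigr)=\det(AD-CB),$$
which is the claimed identity when $A$ is invertible.

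For general $A$, I would perturb: replace $A$ by $A+tI$ for a scalar parameter $t$. Commutativity is preserved, since $(A+tI)C=AC+tC=CA+tC=C(A+tI)$, and $A+tI$ is invertible for every $t\in\mathbb{C}$ except the finitely many $t$ for which $-t$ is an eigenvalue of $A$. For each such $t$ the invertible case already established yields
$$\det\begin{bmatrix} A+tI & B\\ C & D\end{bmatrix}=\det\bigl((A+tI)D-CB\bigr).$$
Both sides are polynomial functions of $t$ (each is a polynomial in the entries of a matrix whose entries depend affinely on $t$), and they coincide for all but finitely many values of $t$; hence they coincide identically. Setting $t=0$ gives the lemma.

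The argument is essentially routine, and I do not expect a genuine obstacle; the only point requiring a little care is the polynomial-identity step, where one must observe that $A+tI$ still commutes with $C$, so that the invertible case legitimately applies for cofinitely many $t$, and that a polynomial in one variable vanishing at infinitely many points is identically zero. (The same reasoning works over an arbitrary field after passing to the algebraic closure to guarantee infinitely many admissible values of $t$.)
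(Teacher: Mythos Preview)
Your argument is correct and is in fact the standard proof of this identity: Schur complement when $A$ is invertible, followed by the perturbation $A\mapsto A+tI$ and a polynomial-identity argument to remove the invertibility hypothesis. All the details you flag (that $A+tI$ still commutes with $C$, that both sides are polynomials in $t$, and that agreement on a cofinite set forces identical equality) are handled correctly.

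There is nothing to compare here, since the paper does not supply its own proof of this lemma; it simply cites \cite[Theorem 3.1.2]{Prasolov}. Your write-up would serve perfectly well as a self-contained substitute for that citation.
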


\begin{proof}[Proof of Theorem \ref{Aglerden}] By Lemma
\ref{lem:Schur-compl}, the right hand side of \eqref{prevoverp}
equals
\begin{equation}\label{eq1}
\frac{\det\begin{bmatrix} -K^* & -\sqrt{I-K^*K}Z_n
\cr \sqrt{I-KK^*} & I-KZ_n \end{bmatrix}}{\det (I-KZ_n)}.
\end{equation} Let $K=U\Sigma V^*$ be a
singular value decomposition of $K$. Then the numerator of \eqref{eq1} equals
\begin{equation}\label{eq1a}
\det\begin{bmatrix} VU^* & 0 \cr 0 & I \end{bmatrix}\det\begin{bmatrix} -U\Sigma U^* & -U\sqrt{I-\Sigma^2}V^*Z_n
\cr U\sqrt{I-\Sigma^2}U^* & I-KZ_n \end{bmatrix}.
\end{equation}
Applying Lemma \ref{commutingAC}, noting that
$-U\Sigma U^*$ and $U\sqrt{I-\Sigma^2} U^*$ commute, we get that \eqref{eq1a} equals
\begin{equation*}\label{eq2} \det (VU^*) \det (-U \Sigma U^* (I-U\Sigma V^*Z_n) + U\sqrt{I-\Sigma^2} U^* U \sqrt{I-\Sigma^2} V^*Z_n) \end{equation*} \begin{equation}= \det (Z_n-K^*).
\end{equation}
To prove \eqref{prevoverp} it remains to observe that
\begin{equation*}\label{prev}z^n \overline{p}(1/z) = z^n \det(I-\overline{K}Z_n^{-1}) = \det(I-\overline{K}Z_n^{-1})\det Z_n\end{equation*} \begin{equation} = \det (Z_n-\overline{K}) = \det(Z_n-K^*), \end{equation}
where in the last step we used that $Z_n^\top=Z_n$. As the Julia
operator
$$ \begin{bmatrix} -K^* & \sqrt{I-K^*K} \cr \sqrt{I-KK^*} & K \end{bmatrix}   $$
is unitary, the multivariable rational inner matrix function
$$ -K^* + \sqrt{I-K^*K} Z_n (I-KZ_n)^{-1} \sqrt{I-KK^*}  $$ is in the Schur--Agler class.
By Corollary \ref{detcor}, so is its determinant, and thus
$z^n\bar p(1/z)/p(z)$ is in the Schur--Agler class.
\end{proof}
\begin{cor}\label{cor:shrink}
For every $p\in\mathbb{C}[z_1,\ldots,z_d]$ with $p(0)=1$, there
exists $r>0$ such that the polynomial $p_r(z):=p(rz)$ is an
eventual Agler denominator. In fact, if $p$ is given by \eqref{eq:repr}, then one can choose any
$0<r\le1/\|K\|$.
\end{cor}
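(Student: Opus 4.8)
The plan is to obtain the statement as a direct consequence of Theorem~\ref{thm:k-repr}, Theorem~\ref{Aglerden}, and a one-line rescaling identity. First I would use Theorem~\ref{thm:k-repr} to fix a representation $p(z)=\det(I_{|n|}-KZ_n)$ for some $n\in\mathbb N_0^d$ and $K\in\mathbb C^{|n|\times|n|}$ (for the ``in fact'' part such a representation is already given). The key observation is that $Z_n=\bigoplus_{i=1}^d z_iI_{n_i}$ depends linearly on $z$, so that $Z_n$ evaluated at $rz$ equals $rZ_n$ evaluated at $z$. Consequently
\begin{equation*}
p_r(z)=p(rz)=\det\bigl(I_{|n|}-K\,(rZ_n)\bigr)=\det\bigl(I_{|n|}-(rK)Z_n\bigr),
\end{equation*}
i.e.\ $p_r$ admits a representation \eqref{eq:repr} with $K$ replaced by $rK$ and the same $n$.

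Next I would note that whenever $0<r\le 1/\|K\|$ the matrix $rK$ is a contraction, since $\|rK\|=r\|K\|\le 1$ (when $K=0$, i.e.\ $p\equiv1$, the bound $1/\|K\|$ is vacuous and any $r>0$ works). Theorem~\ref{Aglerden}, applied to $p_r$ with the contractive matrix $rK$ in place of $K$, then yields
\begin{equation*}
\frac{z^n\bar{p}_r(1/z)}{p_r(z)}=\det\bigl(-rK^*+\sqrt{I-r^2K^*K}\;Z_n(I-rKZ_n)^{-1}\sqrt{I-r^2KK^*}\bigr),
\end{equation*}
which is the determinant of a rational inner matrix function in $\mathcal{SA}_d$; by Corollary~\ref{detcor} it is itself Schur--Agler. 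Semi-stability of $p_r$ is automatic from the representation, since $\|rKZ_n(z)\|\le r\|K\|\max_i|z_i|<1$ for $z\in\mathbb D^d$ forces $\det(I_{|n|}-rKZ_n)\ne 0$ there. Hence $p_r$ is an eventual Agler denominator of order $n$, which proves both assertions.

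I do not expect a genuine obstacle here; the result is essentially immediate once the rescaling identity is in place. The only items requiring mild care are the degenerate case $p\equiv1$ and keeping track of the fact that the order of the eventual Agler denominator is the $n$ coming from the fixed representation of $p$ (so $n\ge\deg p$), not necessarily $\deg p_r$. If one wishes to optimize $r$, one should minimize $\|K\|$ over all representations of $p$, in the spirit of Remark~\ref{rem:question}; the stated bound holds for whatever representation is used.
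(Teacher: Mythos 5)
Your argument is exactly the paper's proof: fix a representation via Theorem~\ref{thm:k-repr}, observe the rescaling identity $p_r(z)=\det(I_{|n|}-rKZ_n)$, note that $rK$ is contractive for $0<r\le 1/\|K\|$, and conclude via Theorem~\ref{Aglerden}. The additional remarks on semi-stability and the $K=0$ case are harmless elaborations of what the paper leaves implicit.
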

\begin{proof}
Since, by Theorem \ref{thm:k-repr}, every polynomial $p$ with
$p(0)=1$ admits a representation \eqref{eq:repr}, the assertion
follows from the identity $$p_r(z)=\det(I_{|n|}-rKZ_n),$$ and
Theorem \ref{Aglerden}.
\end{proof}
\begin{rem}\label{rem:shrink}
The first statement of Corollary \ref{cor:shrink} can also be
deduced from Corollary \ref{cor:q-scalar}: since
$\|1-p(0)\|_{\mathcal{A}}=0$, the inequality
$\|1-p_r\|_\mathcal{A}\le1$ holds, by continuity, for a
sufficiently small $r>0$. For multi-affine symmetric polynomials a
stronger statement is true \cite[Theorem 1.5]{KneseSymm}: $p_r$ is
an Agler denominator for sufficiently small $r>0$.
\end{rem}

Following \cite{BF}, we call a semi-stable polynomial $p$ {\it scattering
Schur} if $p$ and $z^{\deg p}\bar{p}(1/z)$ have no factor in common.
In \cite[Theorem 1]{Kummert} it was proven that every two-variable scattering Schur polynomial $p$
of degree $n=(n_1,n_2)$ is of
the form \eqref{eq:repr} with $K$ an $(n_1+n_2) \times (n_1+ n_2)$ contraction.
Thus every two-variable scattering Schur polynomial $p$ is an Agler denominator.

The following result provides a partial converse to Theorem \ref{Aglerden}.

\begin{thm}\label{Aglerdenrev} Let $p$ be a $d$-variable scattering Schur polynomial with $p(0)=1$.
If, for some $m\in {\mathbb N}_0^d$, the rational inner function $z^m \bar{p}(1/z)/p(z)$ has a
 transfer-function realization \eqref{eq:tf} of
order $m$, then $p$ admits a representation \eqref{eq:repr} with
$n=m$ and $K$ a contraction.
\end{thm}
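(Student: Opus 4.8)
The plan is to reverse-engineer the transfer-function realization \eqref{eq:tf} into a determinantal representation \eqref{eq:repr}. Start from the hypothesis that $g(z):=z^m\bar p(1/z)/p(z)$ admits a unitary realization
\[
g(z)=A+BZ_m(I-DZ_m)^{-1}C,\qquad U=\begin{bmatrix} A & B\\ C & D\end{bmatrix}\in\mathbb C^{(1+|m|)\times(1+|m|)}\ \text{unitary}.
\]
The first step is to read off the denominator of this realization: by the standard Schur-complement identity (Lemma \ref{lem:Schur-compl}), $g(z)=\det\!\bigl(\begin{smallmatrix} A & BZ_m\\ C & I-DZ_m\end{smallmatrix}\bigr)/\det(I-DZ_m)$, and since $g$ is scalar the numerator is itself a determinant. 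The natural guess is that $\det(I-DZ_m)$ is a scalar multiple of $p(z)$ and the numerator a scalar multiple of $z^m\bar p(1/z)$. I would verify this by comparing the McMillan-type data: $p$ is scattering Schur, so $p$ and $z^m\bar p(1/z)$ have no common factor, which forces $\det(I-DZ_m)=c\,p(z)$ for a nonzero constant $c$; evaluating at $z=0$ gives $\det(I-0)=1=c\,p(0)=c$, hence $\det(I-DZ_m)=p(z)$ exactly. This already produces \eqref{eq:repr} with $n=m$ and $K=D$.

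The second step is to check that $K=D$ is a contraction: $D$ is a compression of the unitary $U$ (it is the lower-right corner), so $\|D\|\le 1$ automatically. This is the easy part. The substantive point is the first step — proving $\det(I-DZ_m)=p(z)$ rather than merely that $\det(I-DZ_m)$ divides, or is divided by, $p(z)$. The main obstacle is handling possible cancellations and the fact that the realization \eqref{eq:tf} need not be minimal: a priori $\det(I-DZ_m)$ could be a proper multiple of $p$, or $p$ could fail to divide it if the realization is badly chosen. Here the scattering-Schur hypothesis does the work. I would argue as follows: write $\det(I-DZ_m)=:\tilde p(z)$ and the numerator as $\tilde N(z)$; then $g=\tilde N/\tilde p$ and also $g=z^m\bar p(1/z)/p$, so $\tilde N\, p=\tilde p\, z^m\bar p(1/z)$ in the polynomial ring. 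Since $\gcd(p,\,z^m\bar p(1/z))=1$ by the scattering-Schur assumption, $p\mid \tilde p$. For the reverse divisibility, note that $\tilde p(z)=\det(I-DZ_m)$ has multi-degree $\le m$ componentwise (each $z_i$ appears in at most $n_i=m_i$ diagonal slots of $Z_m$), while $\deg p=m$ by hypothesis on $p$ (it is the denominator realized at order $m$, and $z^m\bar p(1/z)$ must genuinely have the stated degree for the realization to be of order $m$). Actually I should be careful: what is guaranteed is $\deg_i \tilde p \le m_i$. Combined with $p\mid\tilde p$ and $\deg p = m$, we get $\tilde p = c\,p$ for a constant, and $c=1$ by evaluating at $0$.

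A few auxiliary checks round out the argument. I would confirm that $\deg p = m$ is forced by the hypothesis — that a rational inner $z^m\bar p(1/z)/p$ with $p$ of multi-degree $m'<m$ (in some coordinate) could not be genuinely of realization order $m$ in that coordinate without $z^m\bar p(1/z)$ and $p$ sharing a factor; alternatively one absorbs this into the scattering-Schur cancellation analysis and concludes $\deg_i p = m_i$ directly from $\tilde p = c\,p$ having $\deg_i \tilde p \le m_i$ together with the inner-function normalization $g(z)=z^m\bar p(1/z)/p(z)$ being in lowest terms. I would also double-check the direction of the Schur complement (that it is $\det(I-DZ_m)$ in the denominator, not $\det(DZ_m - I)$, up to the sign $(-1)^{|m|}$ which is harmless) and that the numerator identification $\tilde N(z)=z^m\bar{\tilde p}(1/z)$ is consistent with $U$ unitary via the standard inner-function symmetry $g(z)\,\overline{g(1/\bar z)}=1$ on the torus. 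None of these is deep; the whole theorem is essentially the observation that a unitary realization of the inner function $z^m\bar p(1/z)/p(z)$ has $p(z)=\det(I-DZ_m)$, and that scattering-Schur rules out spurious factors so that the realization denominator is exactly $p$.
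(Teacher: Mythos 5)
Your approach is essentially the paper's: use the Schur-complement identity (Lemma \ref{lem:Schur-compl}) to write $z^m\bar p(1/z)/p(z)=r(z)/s(z)$ with $s(z)=\det(I-DZ_m)$ and $r$ the $2\times 2$ block determinant, invoke scattering Schur to get $p\mid s$, degree-count, normalize at $z=0$, and note $K=D$ is contractive as a corner of a unitary. The one place where you are uncertain --- whether $\deg p=m$ is forced --- is exactly where the paper's argument is cleaner than yours and avoids the question. You try to conclude $\tilde p=cp$ from $p\mid\tilde p$, $\deg\tilde p\le m$, and $\deg p=m$; but $\deg p<m$ is actually compatible with the hypotheses (e.g.\ $p$ of multi-degree $(1,1)$ realized at order $(2,2)$), so your chain would need repair. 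The paper instead writes $s=qp$, cancels $p$ in $(z^m\bar p(1/z))\,s=r\,p$ to get $(z^m\bar p(1/z))\,q=r$, and then counts degrees on \emph{this} equation: $z^m\bar p(1/z)$ has multi-degree exactly $m$ (the constant term of $p$ contributes the $z^m$ term, using $p(0)=1$), while $\deg r\le m$; hence $\deg q=0$, and $q=1$ follows from $s(0)=1=p(0)$. Your last-paragraph ``alternative'' gestures at this but as written is circular (it deduces $\deg p=m$ from $\tilde p=cp$, which is what you are trying to prove). Replacing your $\tilde p=cp$ step with the paper's numerator-side degree count makes your argument complete and identical in substance.
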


\begin{proof} Taking the determinant of both sides of the equality
$$ \frac{z^m \bar{p}(1/z)}{p(z)} = A + BZ_m (I-DZ_m)^{-1}C$$
and using Lemma \ref{lem:Schur-compl}, we obtain
$$ \frac{z^m \bar{p}(1/z)}{p(z)} = \frac{\det \begin{bmatrix} A & -BZ_m \cr C & I-DZ_m
\end{bmatrix}}{\det(I-DZ_m)} =: \frac{r(z)}{s(z)}. $$
Note that both $r(z)$ and $s(z)$ are of degree at most $m$. We now
obtain that
$$ {(z^m \bar{p}(1/z))}{s(z)} = r(z) p(z) . $$
As $p$ is scattering Schur we must have that $p(z)$ divides
$s(z)$, say $s(z) = q(z) p(z)$. Dividing out $p(z)$ in the above
equation, we obtain that
$${(z^m \bar{p}(1/z))}{q(z)} = r(z) . $$
As the left hand side has degree $m+ \deg q$ and the right hand
side degree at most $m$, we obtain that $q$ must be a constant.
But then, using $p(0)=1$ and $s(0)= \det (I-DZ_m)|_{z=0} = 1$, we
obtain that $q=1$, and thus $p(z) = \det (I-KZ_m)$ with $K=D$.
\end{proof}

\begin{cor}\label{affine}
The polynomials $p(z_1, \ldots , z_d) =  1 - \sum_{i=1}^d a_i z_i$
with $\sum_{i=1}^d |a_i| \le 1$, are Agler denominators.
\end{cor}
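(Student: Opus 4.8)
The plan is to exhibit, for a linear polynomial $p(z) = 1 - \sum_{i=1}^d a_i z_i$ with $\sum_{i=1}^d |a_i| \le 1$, an explicit determinantal representation \eqref{eq:repr} with $n = (1,\ldots,1)$ and $K$ a contraction, and then invoke Theorem \ref{Aglerden} (the case $\deg p = n$) to conclude that $p$ is an Agler denominator. First I would observe that $\deg p \le (1,\ldots,1)$, so it suffices to find a $d \times d$ contraction $K$ with $\det(I_d - K Z_{(1,\ldots,1)}) = p(z)$; by the multi-affine expansion recalled in Section \ref{sec:Intro}, this amounts to requiring that the diagonal entries of $K$ be $a_1, \ldots, a_d$ and that \emph{all} higher principal minors $\det K[\alpha]$, $|\alpha| \ge 2$, vanish.

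The natural choice is a rank-one $K$. Write $a_i = |a_i|\,\omega_i$ with $|\omega_i| = 1$ (choosing $\omega_i$ arbitrarily when $a_i = 0$), and set $K = u v^*$ where $u = \col_{i=1}^d[\,|a_i|^{1/2}\omega_i\,]$ and $v = \col_{i=1}^d[\,|a_i|^{1/2}\,]$, so that the $(i,i)$ entry of $K$ is $|a_i|\omega_i = a_i$ as required. Since $K$ has rank one, every principal submatrix $K[\alpha]$ with $|\alpha| \ge 2$ is singular, hence $\det K[\alpha] = 0$; thus $\det(I_d - K Z_{(1,\ldots,1)}) = 1 - \sum_i a_i z_i = p(z)$. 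It remains to check that $K$ is a contraction: $\|K\| = \|u\|\,\|v\| = \bigl(\sum_i |a_i|\bigr)^{1/2}\bigl(\sum_i |a_i|\bigr)^{1/2} = \sum_i |a_i| \le 1$.

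With the representation \eqref{eq:repr} in hand and $K$ contractive, Theorem \ref{Aglerden} applies directly: since $\deg p \le n = (1,\ldots,1)$ — and in fact one may take $n = \deg p$ by deleting the trivial rows and columns corresponding to variables $z_i$ with $a_i = 0$, or simply note that the argument of Theorem \ref{Aglerden} gives an eventual Agler denominator of order $(1,\ldots,1)$ which coincides with $\deg p$ after this reduction — we conclude that $z^{\deg p}\bar p(1/z)/p(z)$ is Schur--Agler, i.e., $p$ is an Agler denominator. I do not anticipate a serious obstacle here: the only minor point requiring care is the bookkeeping when some $a_i$ vanish, so that $\deg p$ is strictly less than $(1,\ldots,1)$ in those coordinates; this is handled by passing to the subtuple of variables that actually occur, for which the rank-one construction and the norm bound go through verbatim. (Alternatively, this corollary also follows from Theorem \ref{Aglerdenrev} together with the classical Sz.-Nagy--Foiaş realization of the scalar inner function $z^{\deg p}\bar p(1/z)/p(z)$, but the rank-one construction above is the most direct route.)
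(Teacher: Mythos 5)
Your proof is correct and takes essentially the same approach as the paper: exhibit a rank-one $d\times d$ contraction $K$ (the paper's explicit choice $K=\bigl[\sqrt{|a_j a_k|}\,e^{i\arg a_k}\bigr]_{j,k=1}^d$ is the transpose of your $uv^*$, which changes nothing since principal minors and the operator norm are transpose-invariant), verify $\|K\|=\sum_i|a_i|\le 1$, and invoke Theorem~\ref{Aglerden}. Your explicit treatment of the degenerate case $a_i=0$ (restricting to the variables that actually occur so that $\deg p=n$) is a point the paper's proof leaves implicit, but it is a correct and worthwhile clarification.
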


\begin{proof} Let $K$ be a $d\times d$ rank 1 contraction with diagonal entries $a_1, \ldots ,
a_d$. One such choice is given by $$K=\bigg[\sqrt{|a_j a_k|}e^{i\
{\rm arg}a_k}\bigg]_{j,k=1}^d. $$ Then $\det
(I_d-KZ_{(1,\ldots,1)})=p(z)$ and the result follows directly from
Theorem \ref{Aglerden}.
\end{proof}

\begin{rem}\label{rem:min}
The matrix $K$ in the proof of Corollary \ref{affine} is clearly
of minimal size. It is also of minimal norm,
$\|K\|=|a_1|+\cdots+|a_d|$, for otherwise $p(z)=\det
(I_d-KZ_{(1,\ldots,1)})$ would be stable.
\end{rem}

  It was shown in \cite[Theorem 3.3]{KneseSymm}
that a multi-affine symmetric polynomial is an Agler denominator
if and only if a certain matrix ${\mathcal B}$ constructed from
the Christoffel--Darboux equation is positive semidefinite.
Subsequently, for $p(z) =1 - \frac{1}{d}\sum_{i=1}^d z_i$, the
positivity of the matrix $\mathcal{B}$ was
 computationally checked up to $d=11$. Using our Corollary
\ref{affine}, we deduce this fact for all $d$.

\begin{cor} Let $p(z) = t -
\frac{1}{d}\sum_{i=1}^dz_i$, where $|t|\ge 1$, and let
$$ {\mathcal B} := (B_{|\alpha |, |\beta |}^{|\alpha \cap
\beta|})_{\alpha, \beta \subseteq \{ 1,\ldots , d-1 \} } , $$ be
the $2^{(d-1)\times (d-1)}$ matrix indexed by subsets $\alpha ,
\beta $ of $\{ 1,\ldots , d-1 \}$, defined via:
\begin{equation}
{d \choose j}^{-1} {d \choose k}^{-1} (p_j\bar p_k - \bar p_{d-j}
p_{d-k}) = (d-j-k+i)B_{j,k}^i-iB_{j-1,k-1}^{i-1},   \end{equation} where $0 \le i \le j,k
\le d-1, p_0=t$, $p_1=\frac{1}{d}$,
$p_j=0$, $j\ge 2$, and $B_{j,k}^{i} = 0$ for $i,j,k$ not
satisfying $ 0 \le i \le j,k \le d-1$. Then ${\mc B}$ is
positive semidefinite.
\end{cor}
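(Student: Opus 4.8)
The plan is to reduce this corollary to Corollary \ref{affine} by a normalization argument, and then to trace through the Christoffel--Darboux machinery of \cite[Theorem 3.3]{KneseSymm} only enough to see that positive semidefiniteness of $\mc B$ is \emph{equivalent} to the Agler-denominator property, not just implied by it. First I would observe that the polynomial $p(z)=t-\frac1d\sum_{i=1}^d z_i$ with $|t|\ge1$ is, after dividing by $t$, of the form $1-\sum_{i=1}^d a_i z_i$ with $a_i=\frac{1}{td}$, so that $\sum_i|a_i|=\frac{1}{|t|}\le1$. Hence by Corollary \ref{affine} (applied to $p/t$, which has the same zero set and the same Agler-denominator status as $p$ since $p$ and $p/t$ differ by a nonzero constant, and $z^{\deg p}\bar p(1/z)$ scales the same way), $p$ is an Agler denominator. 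The point is that $\mc B$ is exactly the matrix whose positivity characterizes this property for multi-affine symmetric polynomials, so the desired conclusion follows.

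The key steps, in order, are: (1) normalize $p$ to $p(0)=1$ form and check $\sum|a_i|\le1$; (2) invoke Corollary \ref{affine} to conclude $p$ is an Agler denominator; (3) quote \cite[Theorem 3.3]{KneseSymm}, verifying that $p(z)=t-\frac1d\sum z_i$ is indeed symmetric and multi-affine (degree $(1,\dots,1)$), so that the cited equivalence applies and the matrix $\mc B$ displayed here coincides with the matrix $\mathcal B$ of that theorem; (4) conclude $\mc B\succeq0$. The formula relating the coefficients $p_j,\bar p_k$ to the entries $B_{j,k}^i$ via $(d-j-k+i)B_{j,k}^i - iB_{j-1,k-1}^{i-1}$ is precisely the Christoffel--Darboux recursion from \cite{KneseSymm}; the substitution $p_0=t$, $p_1=\frac1d$, $p_j=0$ for $j\ge2$ encodes our specific $p$, so no independent computation of the $B_{j,k}^i$ is needed beyond confirming the indexing conventions match.

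The main obstacle I anticipate is step (3): making sure the matrix $\mc B$ as defined by the recursion in this corollary is literally the same object (up to a harmless congruence or reindexing) as the matrix in \cite[Theorem 3.3]{KneseSymm}, since the cited paper may use a slightly different normalization of the Christoffel--Darboux kernel or a different ordering of subsets of $\{1,\dots,d-1\}$. This is bookkeeping rather than mathematics: one unwinds the definition of $\mathcal B$ in \cite{KneseSymm}, specializes its coefficient data to $p_0=t,p_1=1/d,p_j=0\,(j\ge2)$, and checks the recursion reproduces the displayed identity; the boundary convention $B_{j,k}^i=0$ outside $0\le i\le j,k\le d-1$ handles the edge terms. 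Once the identification is in place, positive semidefiniteness transfers immediately from the fact — now established via Corollary \ref{affine} and Theorem \ref{Aglerden} — that $p$ is an Agler denominator, completing the proof.
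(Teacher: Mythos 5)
Your proposal is correct and matches the paper's intended argument: normalize to $p/t$ to apply Corollary \ref{affine}, conclude $p$ is an Agler denominator (noting that multiplying by the unimodular constant $\bar t/t$ does not affect Schur--Agler membership of $z^{\deg p}\bar p(1/z)/p(z)$), and then invoke the equivalence of \cite[Theorem 3.3]{KneseSymm} for multi-affine symmetric polynomials. The paper gives no separate proof environment for this corollary precisely because it is, as you describe, a direct combination of Corollary \ref{affine} with the cited characterization; your only imprecision is calling the passage from $\mathcal B(p)$ to $\mathcal B(p/t)$ a ``congruence or reindexing'' when it is simply a positive scalar multiple ($\mathcal B(p/t)=|t|^{-2}\mathcal B(p)$), which is still harmless for positive semidefiniteness.
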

To illustrate, we choose  $t$ real and $d=3$:
$$ {\mc B} = \frac{1}{18} \begin{bmatrix} 6t^2 & -3t & -3t & 0 \cr -3t & 3t^2+1 & 2 & -3t\cr -3t & 2 & 3t^2+1 & -3t \cr 0 & -3t & -3t & 6t^2 \end{bmatrix}=A^*A, $$
where
$$ A = \frac{1}{\sqrt{18}}\begin{bmatrix} \sqrt{6}t & -\frac{1}{2}\sqrt{6} &  -\frac{1}{2}\sqrt{6} & 0 \cr 0 & -\frac{1}{2}\sqrt{6} &  -\frac{1}{2}\sqrt{6} &  \sqrt{6}t \cr 0 & \sqrt{3(t^2-1)}  & \sqrt{3(t^2-1)} & 0 \cr 0 & 1 & -1 & 0  \end{bmatrix} . $$

\begin{rem}\label{rem:linear}
In the context of \eqref{eq:repr}, the question of whether a given
polynomial is an (eventual) Agler denominator is reduced to
analyzing the matrix of its determinantal representation. This
provides a possible alternative to the transfer-function
realization method. For example, $p(z)=1-\frac{1}{3}(z_1+z_2+z_3)$
admits a representation \eqref{eq:repr} with
$$K=\frac{1}{3}\begin{bmatrix}
1 & 1 & 1 \\
1 & 1 & 1 \\
1 & 1 & 1
\end{bmatrix},$$
which is minimal both in size and in norm; see Remark
\ref{rem:min}. At the same time, the minimal order of a
transfer-function realization \eqref{eq:tf} of
$$z_1z_2z_3\frac{\bar{p}(1/z)}{p(z)}=\frac{3z_1z_2z_3-z_2z_3-z_1z_3-z_1z_2}{3-z_1-z_2-z_3}$$ is $m=(2,2,2)$
\cite{Knese2011,BK}.
\end{rem}

\section{Variations on the Kaijser--Varopoulous--Holbrook example}\label{sec6}

For $s$ real, consider the multivariable polynomial
$$\ds p(z_1, \ldots, z_d)=(1+s)\sum_{m=1}^dz_m^2-\bigg(\sum_{m=1}^dz_m\bigg)^2. $$
The case of $d=3$ and $s=1$ corresponds to the polynomial from \cite{Varo, Hol01}.

\begin{prop}\label{sharper} Let $d>1$, $s>d/2 - 1$, and $p$ be defined as above. Then
$\| p \|_\infty=(1+s)d$, if $d$ is even, and $\| p \|_\infty<(1+s)d$, if $d$ is odd,
while $\|p\|_{\mc A} =(1+s)d$ for all $d$. In particular, $p/\| p \|_\infty$ is not in $\mc{SA}_d$ for odd $d>1$.
\end{prop}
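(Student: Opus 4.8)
The plan is to compute both norms by reducing to an eigenvalue/quadratic-form problem on $\mathbb{C}^d$. Writing $w=(w_1,\dots,w_d)$ and $e=(1,\dots,1)$, the polynomial is the quadratic form $p(z)=\langle M z^{\circ}, \overline{z^{\circ}}\rangle$-type object where $z^\circ=(z_1^2,\dots,z_d^2)$ enters linearly; more precisely, on the distinguished variables $u_m=z_m$ one has $p(z)=(1+s)\sum u_m^2-(\sum u_m)^2=\langle (( 1+s)I - ee^\top) u, u\rangle$ with the understanding that we are pairing the vector $u$ with itself (not its conjugate). So the first step is to treat $\|p\|_\infty$: since $p$ is a quadratic form in $u=(z_1,\dots,z_d)$ with real symmetric matrix $N:=(1+s)I-ee^\top$, and the polydisk closure is the set $|u_m|\le 1$, I will bound $|p(z)|=|\langle Nu,u\rangle|\le \sum_{m,j}|N_{mj}|$ by the triangle inequality, with equality when all $|u_m|=1$ and the phases align the terms; the diagonal contributes $(s)d$ from the $(1+s)\sum z_m^2$ minus the diagonal of $ee^\top$... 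I should be careful: $N_{mm}=(1+s)-1=s$ and $N_{mj}=-1$ for $m\ne j$, so $\sum_{m,j}|N_{mj}|=sd+d(d-1)$. That is not obviously $(1+s)d$, so the phase-alignment must be more subtle — the correct computation is to choose $z_m=\omega_m$ unit and optimize $|(1+s)\sum\omega_m^2-(\sum\omega_m)^2|$ directly. When $d$ is even one takes half the $\omega_m$ equal to $1$ and half equal to $i$ (or more cleverly uses $d$-th roots pattern) so that $\sum\omega_m=0$ while $\sum\omega_m^2=\pm(\text{something of modulus } d)$, giving $|p|=(1+s)d$; the hard part is showing this is actually the maximum and that for odd $d$ one cannot simultaneously make $\sum\omega_m^2$ of modulus $d$ and $\sum\omega_m=0$, forcing a strict inequality.

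The second step is the Schur--Agler norm $\|p\|_{\mathcal A}=(1+s)d$. The upper bound $\|p\|_{\mathcal A}\le(1+s)d$ should follow from von Neumann-type estimates: for commuting strict contractions $T_1,\dots,T_d$, $\|p(T)\|=\|(1+s)\sum T_m^2-(\sum T_m)^2\|\le (1+s)\sum\|T_m^2\|+\|\sum T_m\|^2\le (1+s)d + d^2$ — again too crude, so instead I will realize $p(T)$ as a compression of something cleaner. The right move, following the Kaijser--Varopoulos--Holbrook tradition, is: to get the lower bound $\|p\|_{\mathcal A}\ge(1+s)d$ exhibit an explicit tuple of commuting contractions (on a $(d+2)$-dimensional space, say with $T_m$ acting as partial isometries between three graded layers $\mathbb{C}\to\mathbb{C}^d\to\mathbb{C}$) for which $T_mT_j=0$ when... actually we want a model where $\sum T_m$ hits one layer and $\sum T_m^2$ another, arranged so the operator norm of $p(T)$ is exactly $(1+s)d$. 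For the matching upper bound I expect to use that the Schur--Agler norm is attained/estimated by the same quadratic-form structure after passing to the universal model, i.e., $\|p\|_{\mathcal A}=\sup\|N$ evaluated on row contractions$\|$, which by a dilation/commutant-lifting argument equals the supremum of $|\langle N\xi,\xi\rangle|$ over unit vectors $\xi$ — but since $N$ is not positive this needs the trick of writing $p$ as a difference and invoking the $d=1$ von Neumann inequality coordinatewise only on the pieces that commute.

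Step three: assemble. Given $\|p\|_\infty=(1+s)d$ for even $d$ and $<(1+s)d$ for odd $d$, while $\|p\|_{\mathcal A}=(1+s)d$ always, the function $g:=p/\|p\|_\infty$ has $\|g\|_\infty=1$ but for odd $d>1$, $\|g\|_{\mathcal A}=(1+s)d/\|p\|_\infty>1$, so $g$ violates the von Neumann inequality \eqref{eq:vN} and hence $g\notin\mathcal{SA}_d$; by the remark after \eqref{eq:vN} (a Schur function is Schur--Agler iff \eqref{eq:vN} holds), this is exactly the assertion $p/\|p\|_\infty\notin\mathcal{SA}_d$.

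I expect the main obstacle to be the two sharp equalities involving $\|p\|_{\mathcal A}$: proving $\|p\|_{\mathcal A}=(1+s)d$ requires both an explicit commuting-contraction model realizing the value (the lower bound) and an argument — presumably a factorization of $p(T)$ through contractive operators combined with the classical von Neumann inequality applied one variable at a time to the commuting pieces — showing nothing does better (the upper bound). The parity dichotomy for $\|p\|_\infty$ is the secondary difficulty: it hinges on the elementary but slightly delicate fact that the maximum of $|(1+s)\sum_{m=1}^d\omega_m^2-(\sum_{m=1}^d\omega_m)^2|$ over unit $\omega_m$ equals $(1+s)d$ exactly when one can split the indices into two equal halves with opposite-sign contributions (possible iff $d$ is even, given $s>d/2-1$ which guarantees the diagonal term dominates so the optimum is not achieved some other way).
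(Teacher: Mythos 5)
Your plan identifies the right objects --- the real symmetric matrix $N=(1+s)I-ee^{\top}$ and its eigenvalues, unimodular test points, and an explicit nilpotent commuting tuple --- but at every one of the four places where a sharp bound is needed you stop short and explicitly flag the step as ``the hard part'' or something you ``expect'' to work, so what you have is a proof outline rather than a proof. Two of the gaps are substantive and one concrete detail is wrong.

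First, your even-$d$ test point is incorrect: taking half the $\omega_m$ equal to $1$ and half equal to $i$ gives $\sum\omega_m^2=(d/2)(1)+(d/2)(-1)=0$, so $p=0$, not $(1+s)d$. The correct choice is the alternating $\pm 1$ pattern $z=(1,-1,1,-1,\dots)$, which gives $\sum z_m=0$ and $\sum z_m^2=d$. Second, and more importantly, you never actually prove the upper bound $\|p\|_\infty\le(1+s)d$ or the odd-$d$ strictness. The paper's argument is short: since $A$ (your $N$) has eigenvalues $1+s$ (on $e^\perp$) and $s-d+1$ (on $\operatorname{span}\{e\}$), the hypothesis $s>d/2-1$ gives $\|A\|=1+s$, and then $|z^{\top}Az|\le\|A\|\,\|z\|^2\le(1+s)d$ on $\overline{\mathbb{D}}^d$. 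For odd $d$, if equality held then $z$ would have to lie in the eigenspace $e^\perp$ (the other eigenvalue is strictly smaller in modulus), forcing $\sum z_m=0$; but then $|p(z)|=(1+s)|\sum z_m^2|=(1+s)d$ with $|z_m|=1$ forces $z_m=\pm e^{i\alpha}$, which is incompatible with $\sum z_m=0$ when $d$ is odd. Your sketch names the fact but does not supply this chain of implications, and it is precisely where the hypothesis $s>d/2-1$ enters.

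Third, for $\|p\|_{\mathcal A}\le(1+s)d$ your candidate arguments (term-by-term triangle inequality, appeal to a ``universal model'' and commutant lifting) either give the wrong bound or are not arguments. The actual mechanism is elementary and you circle near it when you say ``factorization through contractive operators'': write
$p(T)=\begin{bmatrix}T_1&\cdots&T_d\end{bmatrix}(A\otimes I)\begin{bmatrix}T_1\\ \vdots\\ T_d\end{bmatrix}$,
note the row and column each have norm at most $\sqrt{d}$ for commuting contractions, and conclude $\|p(T)\|\le d\,\|A\|=(1+s)d$. Fourth, for the lower bound you propose a graded model on a $(d+2)$-dimensional space but do not construct it; the paper instead uses $4\times4$ nilpotent matrices built from unit vectors $v_1,\dots,v_d\in\mathbb{R}^2$ with $\sum v_i=0$, which gives $T_iT_j=\langle v_i,v_j\rangle e_1e_4^{\top}$, $\sum T_i=0$, and $p(T)e_4=(1+s)d\,e_1$. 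Your layered construction could plausibly be made to work, but as written it is a placeholder. The final assembly step (dividing by $\|p\|_\infty$ and invoking the von Neumann inequality criterion) is correct and matches the paper.
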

\begin{proof} Write $p(z_1, \ldots, z_d) = z^\top Az$, where
$$ A=
\begin{bmatrix}
s & -1 & \ldots &-1\\
-1 & s & \ldots &-1\\
  & & \ddots & \\
-1 & -1 &\ldots & s
\end{bmatrix}\quad {\rm and}\quad z= \begin{bmatrix} z_1 \\ \vdots \\ z_d \end{bmatrix}.$$
 Observe that $A$ is symmetric with eigenvalues $1+s$ and $s-d+1$, so  $$\ds\|A\|=\max\big\{1+s,\ |s-d+1|\big\} = 1+s.$$
Hence $\| p \|_\infty \le(1+s)d$. If $d$ is even, one immediately has equality since $p(1, -1, \ldots, 1, -1)=(1+s)d$.
If $d$ is odd, the inequality is strict. Indeed, otherwise $|p|$ would be maximized for some unimodular $z_1,\ldots, z_d$ with zero sum,
as $z$ would then lie in the eigenspace of $A$ corresponding to $1+s$.
But then the equality $$|p(z_1,\ldots, z_d)|=(1+s) \left|\sum_{i=1}^d z_i^2\right|=(1+s)d$$
would force $z_i=\pm e^{i\alpha},\ i=1,\dots, d,$ in conflict with the zero sum condition.

Next, for a tuple of commuting contractions $T=(T_1 , \ldots , T_d )$, we have
$$p(T_1, \dots, T_d) =\begin{bmatrix}
T_1 & \ldots & T_d \end{bmatrix} (A\otimes I) \begin{bmatrix}
T_1\\
\vdots\\
T_d \end{bmatrix},$$ so that
$$\| p(T_1, \dots, T_d)\|  \le \| A\|d=(1+s)d.$$ Choose $v_1,
\ldots, v_d$ to be any unit vectors in $\mathbb R^{2}$ with zero
sum. Then the matrices
$$\ T_i=\begin{bmatrix}
0 & v_i^\top & 0 \\
0 &  0    & v_i\\
0 &  0    & 0 \end{bmatrix} \in {\mathbb R}^{4\times 4},\qquad
i=1,\ldots, d,$$ are such that $\ \|T_i\|=1,\ $
$T_iT_j=T_jT_i=\langle v_i, v_j\rangle e_1 e_{4}^\top ,\ $
$\sum_{i=1}^d T_i=0$, and
$$ p(T_1,\dots,T_d)e_{4}=(1+s)de_{1}, $$
where $e_{j}$ is the $j$th standard unit vector in $\mathbb
R^{4}$. Hence $ \| p \|_{\mathcal A} =(1+s)d$.
\end{proof}
\begin{rem}
In the case of $d=3$, maximizing  $\| p \|_{\mathcal A} / \| p \|_\infty$, the von Neumann constant of $p$, over $s$, we find that the maximum possible ratio is $\frac13\sqrt{\frac{35+13\sqrt{13}}6}\approx1.23$ (occuring for $s=\frac{\sqrt{13}+1}6$).
The previously known lower bound for the von Neumann constant was $\frac{6}{5}$ \cite{Hol01}.
\end{rem}

\subsection*{Acknowledgment}
We thank Victor Vinnikov and Bernd Sturmfels for energizing
discussions, Greg Knese and the anonymous referee for valuable
suggestions, and
 David Scheinker for bringing
 \cite{Borcea,Branden} to our attention.

\end{document}